\def\rr{{\mathbb R}}
\def\rn{{{\rr}^n}}
\def\cn{{\mathbb N}}
\def\fz{\infty}
\def\az{\alpha}
\def\supp{{\mathop\mathrm{\,supp\,}}}
\def\dist{{\mathop\mathrm {\,dist\,}}}
\def\diam{{\mathop\mathrm {\,diam\,}}}
\def\loc{{\mathop\mathrm{\,loc\,}}}
\def\ez{\epsilon}
\def\ro{\rho}
\def\gz{{\gamma}}
\def\r{\right}
\def\lf{\left}
\def\N{\mathbb{N}}
\newcommand{\abs}[1]{\left\vert #1 \right\vert}
\newtheorem{thm}{Theorem}[section]
\newtheorem{lem}[thm]{Lemma}
\newtheorem{prop}[thm]{Proposition}
\newtheorem{cor}[thm]{Corollary}
\newtheorem{quest}[thm]{Question}
\newtheorem{defn}[thm]{Definition}
\newtheorem{exa}[thm]{Example}
\numberwithin{equation}{section}
\begin{document}
\arraycolsep=1pt
\author{Renjin Jiang and Aapo Kauranen } \arraycolsep=1pt
\title{\Large\bf Korn's inequality and John domains}
 \footnotetext{{\it Key words and phrases. {\rm  Korn inequality, John domain, separation property, divergence equation }}
\endgraf  }
\date{\today }
\maketitle
\begin{center}
\begin{minipage}{11cm}\small
{\noindent{\bf Abstract:} Let $\Omega \subset \mathbb{R}^n$, $n\ge2$, be a bounded
 domain satisfying the separation property. We show that the
following conditions are equivalent:

(i) $\Omega$ is a John domain;

(ii) for a fixed $p\in (1,\infty)$, the Korn inequality holds for each $\mathbf{u}\in
W^{1,p}(\Omega,\mathbb{R}^n)$ satisfying $\int_\Omega \frac{\partial
u_i}{\partial x_j}-\frac{\partial u_j}{\partial x_i}\,dx=0$, $1\le i,j\le n$,
$$\|D\mathbf{u}\|_{L^p(\Omega)}\le C_K(\Omega, p)\|\epsilon(\mathbf{u})\|_{L^p(\Omega)}; \leqno(K_p)$$

(ii') for all $p\in (1,\infty)$, $(K_p)$ holds on $\Omega$;

(iii)   for a fixed $p\in (1,\infty)$,  for each $f\in L^p(\Omega)$ with vanishing
mean value on $\Omega$,
there exists a solution $\mathbf{v}\in W^{1,p}_0(\Omega,\mathbb{R}^n)$ to the equation $\mathrm{div}\,\mathbf{v}=f$ with
$$\|\mathbf{v}\|_{W^{1,p}(\Omega,\mathbb{R}^n)}\le C(\Omega, p)\|f\|_{L^p(\Omega)}; \leqno(DE_p)$$

(iii') for all $p\in (1,\infty)$, $(DE_p)$ holds on $\Omega$.

For domains satisfying the separation property, in particular, for  finitely
connected domains in the plane,
our result provides a geometric characterization of the Korn inequality, and gives positive answers to
a question raised by [Costabel \& Dauge, Arch. Ration. Mech. Anal. 217 (2015), 873-898]
and a question raised by [Russ, Vietnam J. Math. 41 (2013), 369-381]. For the plane, our result is best possible
in the sense that, there exist infinitely connected domains which are not John
but support Korn's inequality.
 }\end{minipage}
\end{center}
\vspace{1.0cm}

\section{Introduction}
\hskip\parindent Let $\Omega$ be a bounded domain in $\rr^n$, $n\ge 2$. Throughout the paper,
 a domain is a connected, open and proper subset of $\rr^n$, $n\ge 2$.
 Let $p\in (1,\infty)$.  For each
vector field $\mathbf{v}=(v_1,\cdots,v_n)\in W^{1,p}(\Omega,\rr^n)$,  let
$D\mathbf{v}$ denotes its (distributional) gradient matrix, $\epsilon
(\mathbf{v})$ denotes the symmetric part of $D\mathbf{v}$., i.e.,
$\epsilon(\mathbf{v})=(\epsilon_{i,j}(\mathbf{v}))_{1\le i,j\le n}$
with $$\epsilon_{i,j}(\mathbf{v})=\frac 12\left(\frac{\partial
v_i}{\partial x_j}+\frac{\partial v_j}{\partial x_i}\right), $$
and $\kappa(\mathbf{v})=\{\kappa_{i,j}(\mathbf{v})\}_{1\le i,j\le n}$ be the anti-symmetric part of  $D\mathbf{v}$ as $\frac 12(D\mathbf{v}-D\mathbf{v}^T)$.

Korn's (second) inequality states that, if each $\kappa_{i,j}(\mathbf{v})$ is of vanishing mean value on $\Omega$,  then there exists $C_K>0$ such that
$$\|D\mathbf{v}\|_{L^p(\Omega)}\le C_K\|\ez(\mathbf{v})\|_{L^p(\Omega)}.\leqno(K_{p})$$
The Korn inequality $(K_p)$ is a fundamental tool in the theory of  elasticity
 and fluid mechanics; we refer the reader to
\cite{adm06,ci14,cd15,dl76,f47,ho95,ko89} and the references therein.

The study of Korn inequality on domains has a long history. Friedrichs \cite{f47} proved the Korn inequality $(K_p)$ for $p=2$ on
domains with a finite number of corners or edges on $\partial \Omega$. Ne$\check{\rm c}$as \cite{ne67} proved the Korn inequality on bounded
Lipschitz domains.
Kondratiev and Oleinik \cite{ko89} studied the Korn inequality $(K_2)$ on star-shaped domains.
Recently, Acosta, Dur\'an and Muschietti \cite{adm06} proved the Korn inequality (in a different form than $(K_p)$)
holds for all $p\in (1,\fz)$ on John domains (see Definition \ref{def-john} below).
We refer the reader to
\cite{adl06,adf13,cc05,ci14,drs10,jk15,mm71,nit81,ti72} for more studies on the Korn inequality. It is
worth to mention that  the Korn inequality $(K_p)$ fails for $p=1$ even on a cube; see the examples from \cite{cfm05,or62}.

Recently, there have been some studies concerning the Korn inequality for more irregular domains, such as H\"older domains and $s$-John domains with $s>1$.
 It turns out the Korn inequality $(K_p)$ does not hold for any $1<p<\infty$ on
these domains, and instead there are weighted versions of the Korn inequality;
see \cite{adl06,adf13,jk15} for instance.

 Then one may wonder, is John condition also necessary for domains to  support
the Korn inequality? Generally speaking, one may ask the following question.

\begin{quest}\label{qa}
 What is the geometric counterpart of the Korn inequality $(K_p)$? Is there a necessary and sufficient geometric condition on the domain under consideration for the Korn inequality to hold?
\end{quest}

On the other hand, there are some inequalities that are known to be equivalent to the Korn inequality, on (regular)  planar domains;
cf. \cite{cd15,hp83,ti01}. In \cite{hp83}, Horgan and Payne proved
that on {\em  simply connected regular} domains in the plane,
the Korn inequality $(K_2)$, the Friedrichs inequality and  the Babu\v ska-Aziz inequality
are equivalent to each other with equivalence of constants.

In what follows, for each $p\in (1,\infty)$ the space $L^p_0(\Omega)$ denotes the collection of all $L^p$-integrable functions on $\Omega$ with vanishing mean value. Recall that, the Friedrichs inequality states that for all conjugate harmonic functions $h,g$ on a plane domain $\Omega$,
if $h,g\in L^2_0(\Omega)$, then it holds
$$\|h\|_{L^2(\Omega)}\le C\|g\|_{L^2{(\Omega)}};$$
cf. \cite{cd15,f37,hp83}.
The  Babu\v ska-Aziz inequality  (cf. \cite{ba72,cd15,hp83}) for
the divergence equation states that,
for each $f\in L^2_0(\Omega)$
there exists a solution $\mathbf{v}\in W^{1,p}_0(\Omega,\rr^2)$ to the equation $\mathrm{div}\,\mathbf{v}=f$ with
$$\|\mathbf{v}\|_{W^{1,2}(\Omega,\rr^2)}\le C\|f\|_{L^2(\Omega)}.$$

Recently, Costabel and Dauge  \cite{cd15} verified the equivalence of
the Friedrichs inequality  and  the  Babu\v ska-Aziz inequality with equality of constants on bounded domains,
without requiring any (smooth) regularity of the domain, and they asked the following question:

\begin{quest}\label{qb} Is Korn's inequality equivalent to the  Babu\v ska-Aziz inequality, without requiring
a priori (smooth) regularity on the domain under consideration?
\end{quest}
The implication of Korn's inequality from  the  Babu\v ska-Aziz inequality is
well-known (cf. \cite{adm06,cd15}),
while the reverse part remains open.

Ne$\check{\rm c}$as \cite{ne67} had proved $L^p$-version of the  Babu\v ska-Aziz inequality for $1<p<\infty$ on Lipschitz domains.
Acosta, Dur\'an and Muschietti \cite{adm06}  extended solvability of the divergence equation
on John domains in $\rn$, $n\ge 2$, via a constructive approach.
{ By developing a decomposition technique for John domains,
Diening, Ru\v zi\v cka and Schumacher \cite{drs10} further obtained
solvability of the divergence equation in weighted cases, and also weighted Korn's inequality.  }
Russ \cite{ru13}  asked if one can characterize domains for which
the divergence equation with Dirichlet boundary condition is solvable.
In \cite{jkk14}, the authors with P. Koskela showed that on a bounded simply connected domain
$\Omega$ in $\rr^2$, the divergence equation is solvable in $W^{1,p}_0(\Omega,\rr^2)$ ($1<p<\infty$), if and only if $\Omega$ is a John domain.
This implies that, in particular, on a bounded simply connected plane domain, the  Babu\v ska-Aziz inequality holds if and only if
the domain is John.

Therefore, if Korn's inequality is equivalent to the  Babu\v ska-Aziz inequality, then the domain in some sense has to be a John domain, and this
will also answer our Question A. The main aim of this paper is to provide
positive answers to Questions A and B, if the domain additionally satisfies a
condition known as the  separation property. This condition, which is defined
below, is valid on every finitely connected plane domain.

Before moving further, let us first recall the definition of a John domain. This terminology
was introduced  in \cite{ms79}, but these domains were studied already by F.
John \cite{j61}. In what follows, for each $x\in \Omega$,
we denote by $\rho(x)$ the distance of $x$ to the boundary of $\Omega$, i.e., $d(x,\partial \Omega)$.

\begin{defn}[John domain]\label{def-john}
A bounded domain $\Omega\subset \rn$ with a distinguished point
$x_0\in\Omega$ is a John domain if  there exists a constant $C_J>0$ such that
for all $x\in\Omega$, there is a curve $\gz: [0,l(\gz)]\to\Omega$
parametrised by arclength such that $\gz(0)=x$, $\gz(l(\gz))= x_0$, and
$\rho(\gamma(t))\ge C_Jt$.
\end{defn}
The definition says that each point in a John domain can be joined to the center point with a twisted cone. Clearly, the requirements for being a John domain are much weaker than that for being Lipschitz domains. In particular, every Lipschitz domain is a John domain and John domains may have fractal boundary (e.g., Von Koch snowflake domain).

An additional requirement on the domain, which we will rely on heavily, is the following separation property,
which was introduced by Buckley and Koskela \cite{bk95}, where the separation property was used
to prove the geometric counterpart of Sobolev-Poincar\'e inequalities.

\begin{defn}[Separation property]\label{separation property}
A domain $\Omega \subset\rn$ with a distinguished point
$x_0$ has the separation property if there is a constant $C_s\ge 1$ such that the following
holds: for each $x\in\Omega$ there is a curve $\gz:[0, 1]\to \Omega$ with $\gz(0)= x$,
$\gz(1)= x_0$, and such that for each $t$ either  $\gz([0, t])\subset B:= B(\gz(t), C_s\ro(\gz(t)))$ or each $y\in \gz([0, t]) \setminus B$ belongs to a different component of $\Omega\setminus \partial B$ than $x_0$. In the latter case,
 we call $B$ a separating ball, and call the union of components of $\Omega\setminus  B$ not containing $x_0$
 as \emph{$B$-end}  and denoted by $E_B.$
\end{defn}
The domains satisfying this separation property cannot have a lot of boundary
components, and they may have long tentacles but one cannot have flat cusps (see Section 6).

Notice that if a domain $\Omega$ has the separation property for some $C_s\ge 1$, then it has the separation property for any constant larger
than $C_s$. It follows from the definitions that every John domain has the separation property.
 More generally, it is proved in \cite{bk95}  that, every domain $\Omega$, which
is quasiconformally equivalent to a uniform domain (cf. Jones \cite{jo81}
or Section \ref{discussion}) has
the separation property. In particular, any finitely connected plane domain has
the
separation property; see \cite[p. 583]{bk95} and Corollary \ref{fint-sp}.

Our main result below gives positive answers to Question \ref{qa}, Question \ref{qb}  and \cite[Question 3]{ru13}, if the domain additionally satisfies the separation property.

\begin{thm}\label{main}
Let $\Omega$ be a bounded domain of $\rr^n$, $n\ge 2$. Suppose that $\Omega$
satisfies the separation property.  Then
the following statements are equivalent:

(i) $\Omega$ is a John domain;

(ii) for a fixed $p\in (1,\infty)$, the Korn inequality holds for each $\mathbf{u}\in W^{1,p}(\Omega,\rr^n)$
satisfying
$\int_\Omega\kappa_{i,j}(\mathbf{u})\,dx=0$, $1\le i,j\le n$, as
$$\|D\mathbf{u}\|_{L^p(\Omega)}\le C_K\|\epsilon(\mathbf{u})\|_{L^p(\Omega)};\leqno (K_{p})$$

(ii') the Korn inequality $(K_{p})$ holds for each $p\in (1,\infty)$;

(iii) for a fixed $p\in (1,\infty)$,  for
all $\mathbf{u}\in W^{1,p}(\Omega,\rr^n)$ it holds that
$$\lf\|D\mathbf{u}\r\|_{L^p(\Omega)}\le C_K\lf\{\|\epsilon(\mathbf{u})\|_{L^p(\Omega)}+\lf\|D\mathbf{u}\r\|_{L^p(Q)}\r\},
\leqno (\widehat{K}_{p})$$
where  $Q$ is a fixed cube compactly supported in $\Omega$;

(iii') the Korn inequality $(\widehat{K}_{p})$ holds for each $p\in (1,\infty)$;

(iv)  for a fixed $p\in (1,\infty)$,  for each $f\in L^p_0(\Omega)$,
there exists a solution $\mathbf{v}\in W^{1,p}_0(\Omega,\rn)$ to the equation $\mathrm{div}\,\mathbf{v}=f$ with
$$\|\mathbf{v}\|_{W^{1,p}(\Omega,\rn)}\le C_d\|f\|_{L^p(\Omega)};\leqno(DE_p)$$

(iv') $(DE_p)$ holds for each $p\in (1,\infty)$.
\end{thm}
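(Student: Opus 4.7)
My plan is to establish the cycle
\begin{equation*}
(\mathrm{i})\Rightarrow(\mathrm{iv}')\Rightarrow(\mathrm{iv})\Rightarrow(\mathrm{ii})\Rightarrow(\mathrm{iii})\Rightarrow(\mathrm{i}),
\end{equation*}
from which the equivalences between each fixed-$p$ statement and its all-$p$ counterpart are automatic: each primed version trivially implies its un-primed one, while $(\mathrm{i})\Rightarrow(\mathrm{iv}')$ is proved uniformly in $p$. The first implication is the solvability theorem of Acosta, Dur\'an and Muschietti on John domains. The implication $(\mathrm{iv})\Rightarrow(\mathrm{ii})$ is the classical Ne\v cas duality argument: for $\mathbf u\in W^{1,p}(\Omega,\rn)$ with zero-mean anti-symmetric part, one pairs $\partial_j u_i$ against any $\varphi\in L^{p'}_0(\Omega)$ by solving $\mathrm{div}\,\mathbf v=\varphi$ in $W^{1,p'}_0(\Omega,\rn)$ and integrating by parts. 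The implication $(\mathrm{ii})\Rightarrow(\mathrm{iii})$ is elementary: let $M$ be the constant anti-symmetric matrix whose entries are the averages of $\kappa_{i,j}(\mathbf u)$ on $\Omega$; then $\mathbf u-Mx$ has vanishing $\kappa$-mean so $(K_p)$ applies, while $M$ is controlled by $|Q|^{-1/p}(\|D\mathbf u\|_{L^p(Q)}+\|D(\mathbf u-Mx)\|_{L^p(Q)})$.

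The substantive step is $(\widehat{K}_p)\Rightarrow(\mathrm{i})$, which I plan to prove by contraposition. Assume $\Omega$ has the separation property but is not John. Negating Definition \ref{def-john}, for every $k\in\N$ there is $x_k\in\Omega$ such that every arc-length curve to $x_0$ violates the cone condition somewhere; on the separation-property curve $\gamma_k$ from $x_k$ to $x_0$, let $t_k$ be the first parameter with $\rho(\gamma_k(t_k))<t_k/k$, and set $y_k:=\gamma_k(t_k)$, $B_k:=B(y_k,C_s\rho(y_k))$. For $k>C_s$ the arc $\gamma_k([0,t_k])$ of length $t_k$ cannot fit inside the ball $B_k$ of radius $<C_s t_k/k$, so by Definition \ref{separation property}, $B_k$ is a separating ball and $\gamma_k([0,t_k])\setminus B_k\subset E_{B_k}$. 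Since $\rho(\gamma_k(s))\geq s/k$ for $s<t_k$, Whitney-type balls $B(\gamma_k(s_i),\rho(\gamma_k(s_i))/2)$ selected along the arc are pairwise boundedly overlapping and contained in $E_{B_k}$, giving
\begin{equation*}
|E_{B_k}|\;\gtrsim\;\int_0^{t_k}(s/k)^{n-1}\,ds\;\gtrsim\;\frac{t_k^n}{k^{n-1}},\qquad\text{while}\qquad\rho(y_k)^n<\frac{t_k^n}{k^n},
\end{equation*}
so $|E_{B_k}|/\rho(y_k)^n\gtrsim k\to\infty$ as $k\to\infty$.

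On each such $B_k$-end I construct the test vector field $\mathbf u_k(x):=\eta_k(x)\,R(x-y_k)$, where $R$ is a fixed constant antisymmetric $n\times n$ matrix of unit norm and $\eta_k\in C^\infty_c(E_{B_k}\cup 2B_k)$ satisfies $\eta_k\equiv 1$ on $E_{B_k}\setminus 2B_k$ with $|\nabla\eta_k|\lesssim\rho(y_k)^{-1}$. Since $R$ is antisymmetric, $\epsilon(R(x-y_k))\equiv 0$, and a direct calculation gives $\epsilon(\mathbf u_k)=[R(x-y_k)\otimes\nabla\eta_k]^{\mathrm{sym}}$, supported in $2B_k$, where $|R(x-y_k)|\lesssim\rho(y_k)$; hence $\|\epsilon(\mathbf u_k)\|_{L^p(\Omega)}^p\lesssim\rho(y_k)^n$. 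Meanwhile $D\mathbf u_k\equiv R$ on $\{\eta_k=1\}\supset E_{B_k}\setminus 2B_k$ yields $\|D\mathbf u_k\|_{L^p(\Omega)}^p\gtrsim|E_{B_k}|$. Because $\rho(y_k)\to 0$ and the separating ball approaches $\partial\Omega$, the fixed interior cube $Q$ lies in the component of $\Omega\setminus B_k$ containing $x_0$ for all $k$ large, so $Q\cap\supp\mathbf u_k=\emptyset$ and $\|D\mathbf u_k\|_{L^p(Q)}=0$. Inequality $(\widehat{K}_p)$ would then force $|E_{B_k}|/\rho(y_k)^n\lesssim C_K^p$, contradicting the geometric extraction.

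The main obstacle is the geometric extraction step, i.e., rigorously converting the heuristic Whitney-tube estimate into a lower bound on $|E_{B_k}|$ depending solely on the combination of failure of John and the separation property. One must verify the essential disjointness and $E_{B_k}$-containment of the Whitney balls along $\gamma_k([0,t_k])$, which hinges on the fact that the separation-property curve, prior to the first failure time $t_k$, stays quantitatively far from $\partial\Omega$ and does not itself cross any other separating ball at a comparable scale. A secondary technical point is to construct the cut-off $\eta_k$ smoothly on a $B$-end with possibly fractal boundary, handled by mollifying a Lipschitz distance-based cut-off at scale $\rho(y_k)/100$; and the containment of $Q$ in the $x_0$-component of $\Omega\setminus B_k$ for $k$ large follows from $\rho(y_k)\to 0$ and the compact containment of $Q$ in $\Omega$.
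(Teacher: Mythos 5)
Your chain of easy implications is fine: (i)$\Rightarrow$(iv$'$) is Acosta--Dur\'an--Muschietti, (iv)$\Rightarrow$(ii) is the standard Ne\v cas duality as in the paper's Lemma \ref{john-korn}, and (ii)$\Rightarrow$(iii) by subtracting the constant antisymmetric matrix $M$ is correct. Your plan for the substantive implication $(\widehat K_p)\Rightarrow$(i) is, however, structurally different from the paper's and contains genuine gaps. The paper factors this step through a clean geometric lemma (Theorem \ref{john-geo}): under the separation property, $\Omega$ is John \emph{if and only if} $|E_B|\le C_E|B|$ for every separating ball $B$, proved via a chain of Whitney cubes, a reduction to curves meeting each Whitney cube in one component (Proposition \ref{separation}), and the Astala--Gehring summation Lemma \ref{astala-gehring}. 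Having that characterization, the test-function argument only needs to verify the measure bound, which is short and needs none of the delicate curve analysis. You instead try to run the converse direction of the geometric lemma inline by contraposition, and that is exactly where the difficulties lie.

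Concretely, three steps in your geometric extraction do not hold as stated. First, you claim that the arc $\gamma_k([0,t_k])$ of arc length $t_k$ ``cannot fit inside'' the ball $B_k$ of radius $C_s\rho(y_k)<C_st_k/k$, hence $B_k$ is separating. But a curve of length $L$ can easily wind inside a ball of radius much less than $L$; nothing in $\rho(\gamma_k(s))\ge s/k$ for $s<t_k$ prevents this (one only gets $\rho(\gamma_k(s))\le(1+C_s)\rho(y_k)$, i.e.\ $s\le(1+C_s)t_k$, which is vacuous). Moreover, the separation-property curve is parametrized on $[0,1]$ and is not assumed rectifiable, so ``arc-length $t_k$'' is not even well-defined without further argument; the paper avoids this by working with the diameter formulation of John and proving $\diam\gamma([0,t])\lesssim\rho(\gamma(t))$ from the measure bound. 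Second, you assert that the Whitney-type balls selected along $\gamma_k([0,t_k])$ are pairwise boundedly overlapping and contained in $E_{B_k}$; this is precisely the content that Proposition \ref{separation}, the Whitney-chain ordering, and Lemma \ref{astala-gehring} are designed to deliver, and it is not ``heuristic'' overhead but the heart of the geometric lemma. Third, and more subtly, your contradiction requires $Q\cap\supp\mathbf u_k=\emptyset$, but you work with the separation property based at $x_0$. The cube $Q$ in $(\widehat K_p)$ is arbitrary and need not be anywhere near $x_0$, so $Q$ may well lie in $E_{B_k}$; in that case $\|D\mathbf u_k\|_{L^p(Q)}\sim|Q|^{1/p}$ and $(\widehat K_p)$ gives only $|E_{B_k}|\lesssim\rho(y_k)^n+|Q|$, which is no contradiction at all. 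The paper handles exactly this by first showing (the lemma preceding Theorem \ref{t2.2}) that the separation property holds with respect to the center $x_Q$ of $Q$, so that $E_{B_k}$ automatically misses $Q$. You would need to import that lemma, or an equivalent argument, to close your proof.
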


We want to stress that the statement of Theorem \ref{main} does {\em not} hold
without some additional assumption on the domain; see Example \ref{infinite-nonjohn} and Example \ref{simply-nonjohn} below.

Notice that  $(K_p)$ is equivalent to the following Korn inequality for all $\mathbf{u}\in W^{1,p}(\Omega,\rn)$,
$$\inf_{S=-S^T}\|D\mathbf{u}-S\|_{L^p(\Omega)}\le C_K\|\epsilon(\mathbf{u})\|_{L^p(\Omega)}.\leqno(\widetilde K_p)$$
The implication $(K_p)\Rightarrow (\widetilde K_p)$  follows by applying $(K_p)$ to $\mathbf{u}-Sx$ for each $\mathbf{u}\in W^{1,p}(\Omega,\rn)$, where the elements $s_{i,j}$ of $S$ are chosen as $\int_{\Omega}\kappa_{i,j}(\mathbf{u})\,dx$;
the converse implication follows by noticing that for $\mathbf{u}\in W^{1,p}(\Omega,\rn)$ with $\int_\Omega \kappa_{i,j}(\mathbf{u})\,dx=0$,
$\|\kappa_{i,j}(\mathbf{u})\|_{L^p(\Omega)}\le C(\Omega,p)\|\kappa_{i,j}(\mathbf{u})-a\|_{L^p(\Omega)}$ for any $a\in \rr$.

By \cite[Theorem 1.1]{bk95}, any of the above seven conditions is further equivalent to a Sobolev-Poincar\'e inequality, as
$$\left(\int_\Omega |u-u_\Omega|^{{np}/({n-p})}\,dx\right)^{({n-p})/{np}}\le C\left(\int_\Omega |\nabla u|^{p}\,dx\right)^{1/p},$$
for some $p\in [1,n)$, where $u_\Omega$ denotes the average of integral of $u$ on $\Omega$.

The implication $(i)\Longrightarrow(iv)$ was proved in \cite{adm06}, where the separation property is not required.
In \cite{jkk14}, with Koskela, the authors have proven $(iv)\Longrightarrow (i),$  except for $p= n \geq 2,$ directly by using Poincar\'e inequality and Hardy inequality. Therefore, Theorem \ref{main} improves also this result slightly.

The implication $(vi)\Longrightarrow(ii), (iii)$ is well-known, we include a proof for completeness in Section 2. The main achievement of
this paper is that we show that  $(ii), (iii)\Longrightarrow (i)$ is also true,
if the domain satisfies the separation property,
which holds in particular on finitely connected domain $\Omega$ in the plane by Corollary \ref{fint-sp} below.

\begin{cor}\label{main-cor}
Let $\Omega$ be a bounded finitely connected domain in $\rr^2$.
Then the following statements are equivalent:

(i) $\Omega$ is a John domain;

(ii)  the Korn inequality $(K_p)$ holds for some $p\in(1,\infty)$;

(ii') the Korn inequality $(K_p)$ holds for each $p\in(1,\infty)$;


(iii)   the Korn inequality $(\widehat K_p)$ holds for some $p\in(1,\infty)$;

(iii')   the Korn inequality $(\widehat K_p)$ holds for each $p\in(1,\infty)$;

(iv)    $(DE_p)$ holds for some $p\in(1,\infty)$;

(iv') $(DE_p)$ holds for each $p\in(1,\infty)$.
\end{cor}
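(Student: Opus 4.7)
The plan is to derive Corollary \ref{main-cor} immediately from Theorem \ref{main} once the hypothesis of that theorem (namely, the separation property of $\Omega$) has been verified for every bounded finitely connected planar domain. This verification is precisely Corollary \ref{fint-sp} referenced in the introduction; with it in hand, each of the seven equivalences stated here is simply the specialization of the corresponding statement in Theorem \ref{main} to this class of domains. The ``some $p$'' in the corollary matches the ``a fixed $p$'' in the theorem, since an existence statement in $p$ is exactly what Theorem \ref{main} uses.

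To prove that a bounded finitely connected domain $\Omega\subset\rr^2$ has the separation property, I would argue by conformal reduction to a model domain. By the classical Koebe uniformization theorem for multiply connected planar domains, $\Omega$ is conformally equivalent to a circle domain $\Omega'$, i.e., a bounded domain whose complement consists of finitely many closed round disks (with possibly a few degenerate point components, which cause no trouble for the argument). The circle domain $\Omega'$ is patently uniform in the sense of Jones, hence John, and every John domain trivially enjoys the separation property: if $\gz$ is a John curve with parameter $t$, then $\gz([0,t])\subset B(\gz(t),\rho(\gz(t))/C_J)$ by the defining inequality $\rho(\gz(s))\ge C_J s$ combined with $|\gz(s)-\gz(t)|\le t-s$, so the first alternative in Definition \ref{separation property} is satisfied with $C_s=1/C_J$ and no separating ball is ever needed.

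The main step, and the place where the real work is concentrated, is to transport the separation property back along the conformal map $\vz:\Omega\to\Omega'$. By Koebe's distortion theorem, a conformal map between planar domains distorts the quasihyperbolic metric in a controlled way and sends Whitney-type balls to regions comparable to Whitney balls at the image point. This control allows one to lift a separation curve in $\Omega'$ joining $\vz(x)$ to the basepoint back through $\vz^{-1}$ to a curve in $\Omega$ joining $x$ to the basepoint, and at each scale to lift the ambient ball $B(\vz(\gz(t)), C_s\rho_{\Omega'}(\vz(\gz(t))))$ to a separating ball $B(\gz(t), C_s'\rho_{\Omega}(\gz(t)))$ with $C_s'$ independent of $x$. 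This is essentially the content of the Buckley--Koskela result cited in the paper.

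The principal obstacle is thus the careful bookkeeping of the quasiconformal distortion constants when transferring the separating balls and their $B$-ends across $\vz$: one must check that the dichotomy ``either $\gz([0,t])\subset B$ or each $y\in \gz([0,t])\setminus B$ lies in a different component of $\Omega\setminus \pa B$ than $x_0$'' is preserved by the pullback, which requires the fact that conformal maps preserve the number and relative position of boundary components. Once Corollary \ref{fint-sp} is established in this way, Corollary \ref{main-cor} is an immediate application of Theorem \ref{main}.
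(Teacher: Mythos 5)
Your proof is correct and follows essentially the same route as the paper: the corollary is obtained by specializing Theorem \ref{main} once Corollary \ref{fint-sp} supplies the separation property for every bounded finitely connected planar domain, and your derivation of that corollary (Koebe uniformization to a circle domain, the observation that circle domains are uniform hence John hence satisfy the separation property trivially via the first alternative, and the Buckley--Koskela Lemma \ref{lem-separation} to pull the property back along the conformal map) is precisely the paper's argument in Section \ref{discussion}.
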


The result in dimension two is best possible in the sense that, our characterization of domains for the Korn inequality
works for any finitely connected domains, meanwhile,  one has

\begin{exa}\label{infinite-nonjohn}
There exist infinitely connected plane domains $\Omega$, which are not John
domains, but  the Korn inequalities $(K_p)$ and $(\widehat{K}_p)$
hold on $\Omega$ for all $p\in (1,\infty)$.
\end{exa}

We remark that one cannot hope for exact analogues of  Corollary \ref{main-cor} in dimensions $n\ge 3$,
since the connectivity property is a much weaker condition in higher dimensions than it is in dimension two,
and $\mathbb{R}^n$ is highly rigid when $n\ge 3$. This phenomenon
happens in many areas, for instance, the problem for Sobolev extension domains,
the Schoenfliess theorem fails in $\mathbb{R}^3$ and
there are very few (quasi)conformal mappings  in dimensions $n\ge 3$;  see Jones \cite{jo81}.
Indeed, the following example  shows even simple connectivity is not sufficient to deduce analogues of Corollary \ref{main-cor} in higher dimensions.
\begin{exa}\label{simply-nonjohn}
There exist simply connected domains $\Omega\subset \rr^n$, $n\ge 3$, which are not John domains, but  the Korn inequalities $(K_p)$ and $(\widehat{K}_p)$ hold on $\Omega$ for all $p\in (1,\infty)$.
\end{exa}
The above two examples are based on domains constructed in Buckley-Koskela \cite{bk95}, details will be given in Section \ref{discussion}.

 Corollary \ref{main-cor} together with \cite[Theorem 2.1]{cd15} shows
\begin{cor}\label{main-cor-2}
Let $\Omega$ be a bounded finitely connected domain in $\rr^2$.
Then the following statements are equivalent:

(i) $\Omega$ is a John domain;

(ii) The Korn inequality $(K_2)$ holds;

(iii) The Babu\v ska-Aziz inequality holds;

(iv) The Friedrichs inequality holds.
\end{cor}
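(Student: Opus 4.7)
The plan is to derive Corollary \ref{main-cor-2} as a straightforward consequence of Corollary \ref{main-cor}, specialized to the single exponent $p=2$, combined with the equivalence between the Friedrichs inequality and the Babu\v ska--Aziz inequality established by Costabel and Dauge in \cite{cd15}.

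First I would observe that condition (iii) of Corollary \ref{main-cor-2}, namely the Babu\v ska--Aziz inequality as recalled earlier in the introduction, is literally the $p=2$ instance of $(DE_p)$ appearing as condition (iv) of Corollary \ref{main-cor}. Consequently the equivalence (i) $\Leftrightarrow$ (iii) of the present corollary is nothing other than the equivalence (i) $\Leftrightarrow$ (iv) of Corollary \ref{main-cor} at $p=2$, and requires no additional argument beyond what is already proved there. Similarly, (i) $\Leftrightarrow$ (ii) is the case $p=2$ of the equivalence (i) $\Leftrightarrow$ (ii) of Corollary \ref{main-cor}. At this point (i), (ii) and (iii) have been chained together.

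To close the loop I would invoke \cite[Theorem 2.1]{cd15}, which asserts that on any bounded domain the Babu\v ska--Aziz inequality and the Friedrichs inequality are equivalent, with equivalence of constants, under no regularity hypothesis on the boundary whatsoever. Since a bounded finitely connected planar domain certainly falls within this framework, this yields (iii) $\Leftrightarrow$ (iv), completing the chain of four equivalences.

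There is really no obstacle to overcome here: all the substantive work is contained in Corollary \ref{main-cor}, whose nontrivial direction is $(ii),(iii)\Rightarrow (i)$ via the separation property, and in the purely functional-analytic statement of \cite[Theorem 2.1]{cd15}. Corollary \ref{main-cor-2} is simply the record of what the new geometric equivalence contributes, on finitely connected planar domains, to the classical family of equivalent planar inequalities already investigated by Horgan--Payne \cite{hp83} in the regular, simply connected case.
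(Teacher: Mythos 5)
Your proof is correct and follows exactly the route the paper takes: Corollary \ref{main-cor} at $p=2$ gives the equivalence of (i), (ii) and (iii), and the Costabel--Dauge result \cite[Theorem 2.1]{cd15} supplies (iii) $\Leftrightarrow$ (iv). The paper records this in a single sentence; you have simply spelled out the same reduction.
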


Recall that, in \cite{hp83}, Horgan and Payne verified the equivalence of $(ii), (iii),(iv)$ on {\em simply connected regular domains} in the plane,
Costabel and Dauge  \cite{cd15} showed the equivalence of $(iii), (iv)$ on bounded domains without any a priori regularity assumption.
Our result above requires finite connectivity of underlying domains, and provides the extra geometric equivalence $(i)$.
However, the advantage of \cite{cd15,hp83} is that they have also the equivalence of constants.

The proof of Theorem \ref{main}, in particular, the proof of $(ii), (iii)\Longrightarrow (i)$,  will be divided into two steps. In the first step,
we will give a geometric characterization of the John domains via controlling the measure of ends by separating balls. We expect such a geometric characterization will have independent interest. In the second step, we first construct good test functions, and then use the Korn inequalities and this characterization of John domains to conclude the claim.

The paper is organized as follows. In Section 2, we recall some known result on the divergence equation, and provide the proofs for
the implications $(DE_p)\Longrightarrow(K_q), (\widehat{K}_q)$, $1/p+1/q=1$,  for completeness. In Section 3, we provide a geometric characterization of the John domains.
In Section 4 and Section 5, we will prove $(ii), (iii)\Longrightarrow (i)$
and complete the proof of Theorem \ref{main}. In section 6, we will discuss the necessity of  our assumption of the separation property and some related problems, and provide details for Example \ref{infinite-nonjohn} and Example \ref{simply-nonjohn}.

Throughout the paper, we denote by $C$ positive constants which
are independent of the main parameters, but which may vary from
line to line.  For matrices  $S\in \rr^{n\times n}$ we use the norm $|S|:=\max\{|s_{i,j}|: 1\le i,j\le n\}$.

\section{The divergence equation and Korn inequality}
\hskip\parindent In this section, let us recall the known results on the divergence equation and its applications to the Korn inequality. The following result was proved in \cite{adm06}; see also \cite{drs10} for a different
proof.
\begin{thm}[\cite{adm06}]\label{adm}
Let $q\in (1,\infty)$ and $\Omega \subset \rr^n$ be a bounded John domain. Then for each $f\in L^q_0(\Omega)$,
there exists a solution $\mathbf{v}\in W^{1,q}_0(\Omega,\rn)$
to the equation $\mathrm{div}\,\mathbf{v}=f$ with
$$\|\mathbf{v}\|_{W^{1,q}(\Omega,\rn)}\le C\|f\|_{L^q(\Omega)}.\leqno(DE_q)$$
\end{thm}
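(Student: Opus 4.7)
The plan is to solve the divergence equation by a local-to-global construction, following the spirit of Bogovskii's classical solution on star-shaped domains, together with a decomposition of $\Omega$ adapted to its John geometry. First I would fix a Whitney decomposition $\W=\{Q_k\}$ of $\Omega$ into dyadic cubes with $\ell(Q_k)\sim\dist(Q_k,\partial\Omega)$ and use the John condition to organize $\W$ into a tree rooted at a central cube $Q_0$ containing $x_0$. For each $Q\in\W$ the John curve from (any point of) $Q$ to $x_0$ produces a distinguished chain $Q=Q^{(0)},Q^{(1)},\ldots,Q^{(m)}=Q_0$ of Whitney cubes in which consecutive members touch, sidelengths grow by bounded factors, and the ``shadow'' map sending $Q$ to its $j$-th ancestor has bounded multiplicity at each scale. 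This chain decomposition is the geometric backbone of the argument.

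Next I would decompose $f$ into local pieces associated to the Whitney cubes. Let $a_Q=|Q|^{-1}\int_Q f\,dx$ and write
$$f=\sum_{Q\in\W}\bigl(f\chi_Q-a_Q\chi_Q\bigr)+\sum_{Q\in\W}a_Q\chi_Q.$$
Every summand in the first piece has mean zero on a single cube and so, by Bogovskii's explicit solution operator on a cube, produces a local vector field $v_Q\in W^{1,q}_0(Q,\rn)$ with $\mathrm{div}\,v_Q=f\chi_Q-a_Q\chi_Q$ and $\|\nabla v_Q\|_{L^q(Q)}\le C\|f\|_{L^q(Q)}$; summing and using bounded overlap yields an $L^q$ bound by $\|f\|_{L^q(\Omega)}$. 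The second sum is globally mean-zero but is not locally so on any single cube, and controlling it is the heart of the proof.

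For the second sum I would exploit the tree structure to telescope the local averages along each chain back to $Q_0$. Along each edge $e=(Q^{(j)},Q^{(j+1)})$ of the tree one constructs a single auxiliary function $w_e$, supported on $Q^{(j)}\cup Q^{(j+1)}$ and with vanishing mean, encoding the total mass $\sum_{Q\succeq e}a_Q|Q|$ of the descendants of $e$ that needs to be transferred through $e$. After telescoping, $\sum_Q a_Q\chi_Q=\sum_e w_e$ in the sense of distributions, and each $w_e$ again admits a local Bogovskii solution $v_e$, supported on the pair $Q^{(j)}\cup Q^{(j+1)}$, with $\|\nabla v_e\|_{L^q}\le C\|w_e\|_{L^q}$. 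Setting $\mathbf{v}=\sum_Q v_Q+\sum_e v_e$ yields a vector field satisfying $\mathrm{div}\,\mathbf{v}=f$ with compact support in $\Omega$.

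The main obstacle, and the point where the John hypothesis is used in an essential way, is the global estimate $\sum_e\|w_e\|_{L^q}^q\le C\|f\|_{L^q(\Omega)}^q$. The mass $w_e$ accumulates contributions from every cube whose chain traverses $e$, so this bound reduces to a discrete Hardy/Poincar\'e-type inequality on the Whitney tree. The bounded multiplicity of the shadow map, which is a direct consequence of $\rho(\gamma(t))\ge C_J t$ in Definition \ref{def-john}, is exactly what prevents the telescoped sums from blowing up and makes the tree inequality valid for every $q\in(1,\infty)$. Once that tree estimate is in hand, assembling the Bogovskii bounds over $Q$ and over $e$ and verifying $\mathbf{v}\in W^{1,q}_0(\Omega,\rn)$ completes the proof.
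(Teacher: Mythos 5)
The paper does not reprove this statement; it cites it as a known theorem of Acosta, Dur\'an and Muschietti \cite{adm06}, and separately remarks that Diening, Ru\v zi\v cka and Schumacher \cite{drs10} gave a different proof based on a decomposition technique. Your proposal is not a reconstruction of the ADM argument: ADM build an explicit Bogovskii-type integral operator adapted to the John geometry and prove directly that its kernel is of Calder\'on--Zygmund type, which yields the $L^q$ estimate by singular integral theory. Your plan --- Whitney tree rooted at a central cube, local Bogovskii on each cube for the mean-zero part, telescoping of the local averages along chains, and a discrete Hardy estimate for the transported masses --- is precisely the DRS decomposition approach (which is indeed the one used in related contexts, cf. the weighted Korn inequalities in \cite{drs10}). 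So your route is legitimate and in fact matches one of the two references the paper points to, just not the cited one.

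Two points in the outline deserve sharpening. First, the quantity you actually need from the John condition is not a pointwise ``bounded multiplicity of the shadow map at each scale'' but a Carleson packing condition: for every cube $R$ in the Whitney tree, $\sum_{Q:\,R\in\mathrm{chain}(Q)}|Q|\le C|R|$, which follows from the fact that $\rho(\gamma(t))\ge C_J t$ forces all cubes whose chain passes through $R$ to lie in a bounded dilate of $R$, and Whitney cubes have disjoint interiors. Merely bounding the number of cubes of each fixed scale that chain through $R$ is weaker and would not close the discrete Hardy inequality. Second, the assembled field $\mathbf{v}=\sum_Q v_Q+\sum_e v_e$ is generally not compactly supported in $\Omega$; rather, it is the $L^q$-convergent limit of compactly supported pieces, hence belongs to $W^{1,q}_0(\Omega,\rn)$ once the global $L^q$ bound is in place. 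With these clarifications the plan is a correct skeleton of the DRS proof.
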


It is somehow standard to show that $(DE_q)$ implies the Korn inequalities $ (K_{p})$ and $(\widehat{K}_{p})$, where $1/p+1/q=1$.
We provide a proof for completeness.

\begin{lem}\label{john-korn}
Let $\Omega \subset \rr^n$ be a bounded domain, $p,q\in (1,\infty)$ be a H\"older conjugate pair.
If $(DE_q)$ holds on $\Omega$, then the Korn inequalities $ (K_{p})$ and $(\widehat{K}_{p})$ hold on $\Omega$.
\end{lem}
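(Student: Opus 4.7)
The plan is an $L^p$--$L^q$ duality argument anchored on the classical distributional identity
\begin{equation*}
\partial_k\bigl(\partial_j u_i-\partial_i u_j\bigr)=2\,\partial_j\epsilon_{i,k}(\mathbf{u})-2\,\partial_i\epsilon_{j,k}(\mathbf{u}),\qquad 1\le i,j,k\le n,
\end{equation*}
which rewrites derivatives of the antisymmetric part of $D\mathbf{u}$ in terms of first derivatives of $\epsilon(\mathbf{u})$. The bound I really want to produce, and from which both $(K_p)$ and $(\widehat K_p)$ will flow, is
$\|\omega_{i,j}-\overline{\omega_{i,j}}\|_{L^p(\Omega)}\le C\|\epsilon(\mathbf{u})\|_{L^p(\Omega)}$,
where $\omega_{i,j}:=\partial_j u_i-\partial_i u_j=2\kappa_{i,j}(\mathbf{u})$ and $\overline{\omega_{i,j}}$ is its average over $\Omega$.

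By the $L^p_0(\Omega)$--$L^q_0(\Omega)$ duality, this reduces to controlling $\int_\Omega\omega_{i,j}\,\varphi\,dx$ uniformly in $\varphi\in L^q_0(\Omega)$ with $\|\varphi\|_{L^q}\le 1$. For such a $\varphi$, $(DE_q)$ supplies $\mathbf{v}\in W^{1,q}_0(\Omega,\rn)$ with $\mathrm{div}\,\mathbf{v}=\varphi$ and $\|\mathbf{v}\|_{W^{1,q}(\Omega)}\le C_d$. Replacing $\varphi$ by $\mathrm{div}\,\mathbf{v}$, mollifying $\mathbf{v}$ (not $\mathbf{u}$, which has only one weak derivative) and integrating by parts twice using the identity above transforms the pairing into
\begin{equation*}
\int_\Omega\omega_{i,j}\,\varphi\,dx=2\sum_{k=1}^n\int_\Omega\bigl[\epsilon_{i,k}(\mathbf{u})\partial_j v_k-\epsilon_{j,k}(\mathbf{u})\partial_i v_k\bigr]\,dx,
\end{equation*}
which H\"older's inequality bounds by $CC_d\|\epsilon(\mathbf{u})\|_{L^p(\Omega)}$. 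Taking the sup over $\varphi$ gives the target estimate.

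Finally, $(K_p)$ is immediate: the hypothesis $\int_\Omega\kappa_{i,j}(\mathbf{u})\,dx=0$ forces $\overline{\omega_{i,j}}=0$, so $\|\kappa_{i,j}(\mathbf{u})\|_{L^p(\Omega)}\le C\|\epsilon(\mathbf{u})\|_{L^p(\Omega)}$, and the decomposition $D\mathbf{u}=\epsilon(\mathbf{u})+\kappa(\mathbf{u})$ then yields $(K_p)$. For $(\widehat K_p)$, the extra step is to absorb the constant $a:=\overline{\kappa_{i,j}(\mathbf{u})}$ by testing against the fixed cube $Q\Subset\Omega$: the triangle inequality on $Q$ gives
\begin{equation*}
|a|\,|Q|^{1/p}\le \|\kappa_{i,j}(\mathbf{u})\|_{L^p(Q)}+\|\kappa_{i,j}(\mathbf{u})-a\|_{L^p(Q)}\le \|D\mathbf{u}\|_{L^p(Q)}+C\|\epsilon(\mathbf{u})\|_{L^p(\Omega)},
\end{equation*}
and multiplying by $(|\Omega|/|Q|)^{1/p}$, then summing in $i,j$, delivers $(\widehat K_p)$. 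The only real care is in justifying the integrations by parts at the $W^{1,p}$ regularity of $\mathbf{u}$, handled by mollification on the $W^{1,q}_0$ side as above; everything else is routine.
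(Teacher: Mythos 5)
Your proposal is correct and follows essentially the same duality argument as the paper: pair the antisymmetric part against $\mathrm{div}\,\mathbf{v}$ from $(DE_q)$, integrate by parts twice using the second-derivative identity for $\epsilon(\mathbf{u})$, and apply H\"older. The only cosmetic differences are that the paper estimates each $\partial_j u_i - (\partial_j u_i)_\Omega$ directly (rather than just $\omega_{i,j}-\overline{\omega_{i,j}}$) and uses a smooth bump $\psi$ supported in $Q$ in place of your characteristic-function test when passing to $(\widehat K_p)$; both variants yield the same result by the same mechanism.
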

\begin{proof}
Recall that  $D\mathbf{u}=(\frac{\partial u_i}{\partial x_j})_{1\le i,j\le n}$, $1\le i,j\le n$,
and $\epsilon(\mathbf{u})=(\epsilon_{i,j}(\mathbf{u}))_{1\le i,j\le n}$ with
$$\epsilon_{i,j}=\frac 12\left(\frac{\partial u_i}{\partial x_j}+\frac{\partial u_j}{\partial x_i}\right)$$
and the identity
\begin{equation}\label{2.x}
\frac{\partial^2u_i}{\partial x_j\partial x_k}=\frac{\partial \epsilon_{i,k}(\mathbf{u})}{\partial x_j}
+\frac{\partial \epsilon_{i,j}(\mathbf{u})}{\partial x_k}-\frac{\partial \epsilon_{j,k}(\mathbf{u})}{\partial x_i}.
\end{equation}

For each $f\in L_0^q(\Omega)$, from assumption of $(DE_q)$, we see that there
exists a solution $\mathbf{v}\in W^{1,q}_0(\Omega,\rn)$
to the equation $\mathrm{div}\,\mathbf{v}=f$ with
$$\|\mathbf{v}\|_{W^{1,q}(\Omega,\rn)}\le C\|f\|_{L^q(\Omega)}.$$
For each $k\in \{1,2,\cdots,n\}$, let $\mathbf{v}^k$ be the $k$-th component  of
$\mathbf{v}$. Using the identity
\eqref{2.x} yields
\begin{eqnarray*}
 \lf|\int_\Omega f(x)\lf(\frac{\partial u_j}{\partial
x_i}(x)-\lf(\frac{\partial u_j}{\partial x_i}\r)_{\Omega}\r)\,dx\r|&&=
\lf|\int_\Omega \mathrm{div}\,\mathbf{v}(x) \lf(\frac{\partial u_j}{\partial
x_i}(x)-\lf(\frac{\partial u_j}{\partial x_i}\r)_{\Omega}\r)\,dx\r|\\
&&=\sum_{k=1}^n\lf|\int_{\Omega} \mathbf{v}^k(x) \lf(\frac{\partial
\epsilon_{j,k}(\mathbf{u})}{\partial x_i}
+\frac{\partial \epsilon_{j,i}(\mathbf{u})}{\partial x_k}-\frac{\partial \epsilon_{k,i}(\mathbf{u})}{\partial x_j} \r)(x)\,dx\r|\\
&&\le C\sum_{i,j,k=1}^n\int_\Omega \lf|\frac{\partial\mathbf{v}^k(x)}{\partial
x_i}\epsilon_{j,k}(\mathbf{u})(x)\r|\,dx\\
&&\le  C\|D\mathbf{v}\|_{L^q(\Omega)}\|\epsilon(\mathbf{u})\|_{L^p(\Omega)}\\
&&
\le C\|f\|_{L^q(\Omega)}\|\epsilon(\mathbf{u})\|_{L^p(\Omega)}.
 \end{eqnarray*}
A duality argument gives
\begin{eqnarray}\label{2.1}
 \lf\|\frac{\partial u_j}{\partial x_i}-\lf(\frac{\partial u_j}{\partial x_i}\r)_{\Omega}\r\|_{L^p(\Omega)}\le  C\|\epsilon(\mathbf{u})\|_{L^p(\Omega)}.
\end{eqnarray}

{\bf Proof of $(K_p)$}. Suppose now   $\int_\Omega \kappa_{i,j}(\mathbf{u})\,dx=0$ for $1\le i,j\le n$.
The \eqref{2.1} implies that
\begin{eqnarray*}
 \lf\|2\kappa_{i,j}(\mathbf{u})\r\|_{L^p(\Omega)}\le
  \lf\|\frac{\partial u_i}{\partial x_j}-\lf(\frac{\partial u_i}{\partial x_j}\r)_{\Omega}\r\|_{L^p(\Omega)}+
   \lf\|\frac{\partial u_j}{\partial x_i}-\lf(\frac{\partial u_j}{\partial x_i}\r)_{\Omega}\r\|_{L^p(\Omega)} \le  C\|\epsilon(\mathbf{u})\|_{L^p(\Omega)}.
\end{eqnarray*}
Then the Korn inequality $(K_p)$ follows since
\begin{eqnarray*}
 \lf\|D\mathbf{u}\r\|_{L^p(\Omega)}\le \|\epsilon(\mathbf{u})\|_{L^p(\Omega)}+ \|\kappa(\mathbf{u})\|_{L^p(\Omega)}\le  C\|\epsilon(\mathbf{u})\|_{L^p(\Omega)}.
\end{eqnarray*}


{\bf Proof of $(\widehat K_p)$}.
Now for an arbitrarily fixed cube $Q\subset\subset \Omega$, we choose a $\psi\in C_0^\fz(Q)$
such that $\supp \psi \subset Q$, $\int_Q\psi\,dx=1$ and $|\nabla \psi|\le C/\ell(Q)^{n+1}$.
Write
\begin{eqnarray}\label{2.2}
\frac{\partial u_j}{\partial x_i}=\frac{\partial u_j}{\partial x_i}-\lf(\frac{\partial u_j}{\partial x_i}\r)_{\Omega}
+\int_Q \lf[\lf(\frac{\partial u_j}{\partial x_i}\r)_{\Omega}-\frac{\partial u_j}{\partial x_i}\r]\psi\,dx+
\int_Q \frac{\partial u_j}{\partial x_i}\psi\,dx.
\end{eqnarray}
Then by the H\"older inequality, we obtain
\begin{eqnarray*}
\lf| \int_Q \lf[\lf(\frac{\partial u_j}{\partial x_i}\r)_{\Omega}-\frac{\partial u_j}{\partial x_i}\r]\psi\,dx \r|\le
C(p,Q,\Omega)  \|\epsilon(\mathbf{u})\|_{L^p(\Omega)},
\end{eqnarray*}
and
$$\lf|\int_Q \frac{\partial u_j}{\partial x_i}(x)\psi(x)\,dx\r|\le C(a,p,Q,\Omega)
\lf\|\frac{\partial u_j}{\partial x_i}\r\|_{L^p(Q)}.$$

Combining \eqref{2.1}, \eqref{2.2} and the above estimates, we obtain that
\begin{eqnarray*}
\lf\|\frac{\partial u_j}{\partial x_i}\r\|_{L^p(\Omega)}\le C(p,\Omega,Q)
\lf\{\|\epsilon(\mathbf{u})\|_{L^p(\Omega)}+\lf\|\frac{\partial u_j}{\partial x_i}\r\|_{L^p(Q)}\r\},
\end{eqnarray*}
which is
$$\lf\|D\mathbf{u}\r\|_{L^p(\Omega)}\le C(p,\Omega,Q)
\lf\{\|\epsilon(\mathbf{u})\|_{L^p(\Omega)}+\lf\|D\mathbf{u}\r\|_{L^p(Q)}\r\}.
\leqno (\widehat{K}_{p})$$
The proof is complete.
\end{proof}
Theorem \ref{adm} together with Lemma \ref{john-korn} gives the following corollary.
\begin{cor}\label{john-korn-main}
Let $\Omega \subset \rr^n$ be a bounded John domain. Then for each $p\in (1,\infty)$,
the Korn inequalities $ (K_{p})$ and $(\widehat{K}_{p})$ hold on $\Omega$.
\end{cor}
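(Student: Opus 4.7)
The plan is to obtain this corollary by simply chaining the two prior results of this section. Fix $p \in (1,\infty)$ and let $q \in (1,\infty)$ denote its H\"older conjugate, so that $1/p + 1/q = 1$. The first step is to invoke Theorem \ref{adm} with exponent $q$: since $\Omega$ is a bounded John domain, for every $f \in L^q_0(\Omega)$ there exists a solution $\mathbf{v} \in W^{1,q}_0(\Omega, \rn)$ to $\mathrm{div}\,\mathbf{v} = f$ satisfying the estimate $(DE_q)$.

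The second step is to feed this solvability into Lemma \ref{john-korn}. The hypothesis of that lemma --- that $(DE_q)$ holds on $\Omega$ for some $q$ in the H\"older conjugate role --- is exactly what the previous step supplies. The conclusion then gives both $(K_p)$ and $(\widehat{K}_p)$ on $\Omega$ for the chosen exponent $p$. Since the H\"older pairing $p \leftrightarrow q$ is a bijection of $(1,\infty)$ onto itself, and $p$ was arbitrary, both Korn inequalities hold for every $p \in (1,\infty)$.

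There is no real obstacle here: the proof is a one-line composition. The only bookkeeping point is the careful matching of exponents between Theorem \ref{adm} (which is applied at the conjugate exponent $q$) and Lemma \ref{john-korn} (whose conclusion is stated at the primary exponent $p$). All constants depend only on $p$, $n$, the John constant of $\Omega$, and --- in the case of $(\widehat{K}_p)$ --- on the fixed interior cube $Q \subset\subset \Omega$.
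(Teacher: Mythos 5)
Your proof is correct and is essentially identical to the paper's: the corollary is obtained by applying Theorem \ref{adm} at the conjugate exponent $q$ and then invoking Lemma \ref{john-korn}, exactly as you describe. The paper simply states ``Theorem \ref{adm} together with Lemma \ref{john-korn} gives the following corollary,'' so your account supplies the same argument with the exponent bookkeeping spelled out.
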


\section{A geometric characterization of John domains}
\hskip\parindent
In this section, assuming the separation property, we explore another characterization of the John condition.
We shall show that a domain is John if and only if the measures of ends are controlled with measures of separating balls. Similar arguments have been previously used to show necessity of John condition in other contexts in \cite{hk94} and \cite{bk95}. For the proof we need the following lemma  originally from \cite[Lemma 3.1]{ag85} and the well-known Whitney decomposition (see eg. \cite{s70}).

\begin{lem}[\cite{ag85}]\label{astala-gehring}
 Let $1\leq b<\infty$ and let $x_j$ be a sequence of nonnegative numbers such that for all $k\in \N$
 \begin{equation*}
  \sum ^\infty_{j=k} x_j \leq b x_k.
 \end{equation*}
 Then for every $\alpha\in \left(0,1\right]$  there  exists a constant $c\geq1$, depending only on $b,\alpha$, such that for all $k\in \N$
\begin{equation*}
  \sum ^\infty_{j=k} x_j^\alpha \leq c x_k^\alpha.
 \end{equation*}
\end{lem}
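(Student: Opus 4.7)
The plan is to convert the additive tail hypothesis into geometric decay and then collapse the $\alpha$-power sum to a geometric series. First I would introduce the tails $S_k := \sum_{j=k}^\infty x_j$ and rewrite the hypothesis as $S_k \leq b(S_k - S_{k+1})$, which rearranges to
$$S_{k+1} \leq \theta S_k, \qquad \theta := \frac{b-1}{b} \in [0,1).$$
(If $b=1$ then $\theta = 0$ forces $x_j = 0$ for $j>k$ and the conclusion is immediate, so I assume $b>1$.) Iterating gives $S_j \leq \theta^{j-k} S_k$ for every $j \geq k$, and combining this with the trivial bound $x_j \leq S_j$ and the hypothesis $S_k \leq b x_k$ produces the pointwise geometric decay
$$x_j \leq b\,\theta^{j-k}\,x_k \qquad (j \geq k).$$

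With this in hand the remainder is arithmetic. For $\alpha \in (0,1]$, raising to the power $\alpha$ and summing in $j$ yields
$$\sum_{j=k}^\infty x_j^\alpha \leq b^\alpha x_k^\alpha \sum_{j=k}^\infty \theta^{\alpha(j-k)} = \frac{b^\alpha}{1-\theta^\alpha}\, x_k^\alpha,$$
so the claim holds with $c = b^\alpha/(1-\theta^\alpha)$, which depends only on $b$ and $\alpha$ as required.

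There is essentially no obstacle here. The single observation driving the argument is that a hypothesis of the form \emph{``tail at $k$ is controlled by the $k$-th term''} automatically forces the tails, and hence the individual terms, to decay geometrically with ratio at most $(b-1)/b$; once this is recognised, the $\alpha$-power sum collapses to a geometric series with an explicit constant. The only item to verify is that $1-\theta^\alpha > 0$, which is immediate from $\theta<1$ and $\alpha>0$.
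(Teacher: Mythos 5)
Your proof is correct. The paper does not actually give a proof of this lemma; it is cited directly from Astala--Gehring \cite{ag85}. Your argument---rewriting the hypothesis in terms of the tails $S_k=\sum_{j\geq k}x_j$, extracting the geometric decay $S_{k+1}\leq \frac{b-1}{b}S_k$, and then summing the resulting geometric series after raising to the power $\alpha$---is the standard and essentially the original route, with the edge case $b=1$ (forcing $x_j=0$ for $j\geq 2$) correctly dispatched and the constant $c=b^\alpha/(1-\theta^\alpha)\geq 1$ depending only on $b$ and $\alpha$ as required.
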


\begin{lem}\label{whitney}
For any open proper subset $\Omega\subset\rn$ there exists a collection $W =\{Q_j\}_{j\in\cn}$
of countably many closed dyadic cubes such that

(i) $\Omega=\cup_{j\in \cn}Q_{j}$, and if $k\neq j$, then the cubes have disjoint interiors, $(Q_{j})^\circ\cap (Q_{k})^\circ=\emptyset,$

(ii) $\sqrt n\ell(Q_{k})\le \dist(Q_{k}, \partial\Omega)\le 4\sqrt n\ell(Q_{k})$ and

(iii) $\frac{1}{4}\ell(Q_k)\le \ell(Q_j)\le 4\ell(Q_k)$ whenever $Q_k\cap Q_j\neq \emptyset$.
\end{lem}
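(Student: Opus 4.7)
The plan is to carry out the classical Whitney construction and then verify the three listed properties. First I would fix, for each integer $k$, the standard grid $\mathcal{C}_k$ of closed dyadic cubes of side length $2^{-k}$ (so that $\diam Q=\sqrt{n}\,\ell(Q)$ for $Q\in\mathcal{C}_k$), and select the scale-matched subfamily
$$\mathcal{F}_k:=\{Q\in\mathcal{C}_k:\sqrt{n}\,\ell(Q)\le \dist(Q,\partial\Omega)\le 4\sqrt{n}\,\ell(Q)\}.$$
Let $\mathcal{F}:=\bigcup_{k\in\mathbb{Z}}\mathcal{F}_k$. Property (ii) is then built in by definition, so the work is to arrange (i) and (iii) while keeping $\Omega$ covered.

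Next I would verify covering. For each $x\in\Omega$, since $\Omega$ is a proper open set, $0<\dist(x,\partial\Omega)<\infty$, so there is a unique $k$ with $2\sqrt{n}\,2^{-k}\le \dist(x,\partial\Omega)<4\sqrt{n}\,2^{-k}$. If $Q\in\mathcal{C}_k$ contains $x$, the bound $\diam Q=\sqrt{n}\,2^{-k}$ gives
$$\sqrt{n}\,2^{-k}\le \dist(x,\partial\Omega)-\diam Q\le \dist(Q,\partial\Omega)\le \dist(x,\partial\Omega)\le 4\sqrt{n}\,2^{-k},$$
so $Q\in\mathcal{F}_k$, and $x$ is covered by $\mathcal{F}$.

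To enforce the almost-disjointness in (i), I would pass to the subfamily $W$ consisting of those cubes in $\mathcal{F}$ that are \emph{maximal} with respect to inclusion. Since any two dyadic cubes either have disjoint interiors or one contains the other, the cubes in $W$ have pairwise disjoint interiors. Every $x\in\Omega$ is still covered, because the cube of $\mathcal{F}$ containing $x$ sits in a \emph{bounded} ascending chain of cubes in $\mathcal{F}$: if $Q\subsetneq Q'$ with $Q'\in\mathcal{F}$, then $\ell(Q')\le 4\dist(Q',\partial\Omega)\le 4\dist(x,\partial\Omega)$, so such chains terminate and supply a maximal element.

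Finally, for (iii), suppose $Q_j,Q_k\in W$ with a common point $x$. Applying (ii) to both cubes and using $\diam(Q_k)=\sqrt{n}\,\ell(Q_k)$,
$$\sqrt{n}\,\ell(Q_j)\le \dist(Q_j,\partial\Omega)\le \dist(x,\partial\Omega)\le \dist(Q_k,\partial\Omega)+\diam(Q_k)\le 5\sqrt{n}\,\ell(Q_k),$$
and symmetrically $\ell(Q_k)\le 5\,\ell(Q_j)$. Since side lengths are integer powers of two, this forces $\ell(Q_j)/\ell(Q_k)\in\{\tfrac14,\tfrac12,1,2,4\}$, which is exactly (iii). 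The only delicate point in the whole argument — and the reason I expect this to be the main technical obstacle — is calibrating the constants $\sqrt n$ and $4\sqrt n$ in the definition of $\mathcal{F}_k$ so that the pigeonhole choice of $k$ places the containing cube into $\mathcal{F}_k$ \emph{and} the adjacency estimate in (iii) closes up with a factor no worse than $4$; with these specific constants everything lines up.
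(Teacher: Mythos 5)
Your argument is correct and is precisely the classical Whitney construction; the paper gives no proof of this lemma at all and simply cites Stein, so there is no internal proof to compare against, but what you have written is the standard argument with the same constants. Two very small cosmetic points: in the chain-termination step the bound $\sqrt n\,\ell(Q')\le\dist(Q',\partial\Omega)$ gives $\ell(Q')\le\dist(Q',\partial\Omega)$ directly, so the factor $4$ you inserted there is spurious (though harmless); and your final remark frames the constant calibration as a likely obstacle when in fact, as your own computation shows, the pigeonhole band $2\sqrt n\,2^{-k}\le\dist(x,\partial\Omega)<4\sqrt n\,2^{-k}$ together with $\diam Q=\sqrt n\,2^{-k}$ lands cleanly in the interval $[\sqrt n\,\ell(Q),4\sqrt n\,\ell(Q)]$ and the adjacency estimate $\ell(Q_j)\le 5\ell(Q_k)$ snaps down to $4$ by dyadicity, so there is no real delicacy.
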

Remember that Whitney decompositions are not unique. The following statement holds for any fixed $W.$

\begin{prop}\label{separation}
 Let $\Omega\subset\rn$ be a domain satisfying the separation property with constant $C_s\ge 1$ and a distinguished point
$x_0$.  For each point $x\in \Omega$, there is a curve $\gz$ connecting $x$ to $x_0$ that satisfies the separation property
with  constant $5C_s$, and for each Whitney cube $Q\in W$, the set $Q\cap \gz$ has at most one component.
\end{prop}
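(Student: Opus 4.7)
The plan is to modify the given separation curve $\gamma_0\colon[0,1]\to\Omega$ from $x$ to $x_0$ (satisfying separation with constant $C_s$) by shortcutting through every Whitney cube that it enters more than once. The key geometric fact, from Lemma \ref{whitney}(ii), is that each Whitney cube $Q$ is convex, lies in $\Omega$, and satisfies $\mathrm{diam}(Q)\le\rho(y)\le 5\,\mathrm{diam}(Q)$ for every $y\in Q$. A straight-line shortcut through $Q$ therefore stays in $\Omega$ and displaces the curve by at most $\rho$, an amount that can be absorbed into a bounded enlargement of the separation constant.

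I would construct $\gamma$ greedily. Set $t_0=0$ and $\gamma(0):=\gamma_0(0)=x$. Inductively, given $t_k$ with $\gamma(t_k)=\gamma_0(t_k)\ne x_0$, pick a Whitney cube $Q_k$ containing $\gamma_0(t_k)$ for which $t_{k+1}:=\max\{s\ge t_k : \gamma_0(s)\in Q_k\}$ is maximal, and on $[t_k,t_{k+1}]$ define $\gamma$ to be the straight segment in $Q_k$ from $\gamma_0(t_k)$ to $\gamma_0(t_{k+1})$, suitably reparametrized. A short compactness argument, using that Whitney cubes near any accumulation point have diameter bounded below by a positive constant, forces $t_k\to 1$. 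Since we always jump to the last $\gamma_0$-visit of $Q_k$ and subsequent shortcut segments live in distinct cubes, no Whitney cube serves as $Q_k$ twice, so $\gamma\cap Q$ has at most one connected component for every $Q\in W$.

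Finally, I would verify separation for $\gamma$ with constant $5C_s$. Fix $t$ with $\gamma(t)$ in the shortcut through $Q_k$. Since $\mathrm{diam}(Q_k)\le\rho(\gamma(t))$, the entire cube $Q_k$, and hence the current shortcut, lies inside $B:=B(\gamma(t),5C_s\rho(\gamma(t)))$. Any earlier point $y\in\gamma([0,t_k])\setminus B$ corresponds to a nearby point $y_0\in\gamma_0([0,t_{k+1}])$, since each prior shortcut segment lies in its own Whitney cube of diameter $\le\rho$. Choosing a reference point $\gamma_0(\tau)\in Q_k$ and setting $B_0:=B(\gamma_0(\tau),C_s\rho(\gamma_0(\tau)))$, the original separation of $\gamma_0$ at $\gamma_0(\tau)$ places $y_0$ in a component of $\Omega\setminus\partial B_0$ not containing $x_0$, and a topological transfer argument, using that the factor $5$ inflates $B_0$ enough to swallow both the shift $|\gamma(t)-\gamma_0(\tau)|\le\mathrm{diam}(Q_k)$ of centers and the boundary-distance ratio $\rho(\gamma_0(\tau))/\rho(\gamma(t))\le 5$, moves the conclusion to $y$ and $\partial B$.

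The main obstacle is this last transfer step: separation is a connectivity statement about $\Omega\setminus\partial B$, not a metric one, so one must rule out the possibility that enlarging and shifting the ball reconnects the $B_0$-end containing $y_0$ to the component of $x_0$. Carefully tracking the two sources of error (the shift of the center and the boundary-distance ratio, each controlled by $\rho(\gamma(t))$ up to constants) shows that the factor $5C_s$ is what makes the enlarged ball still plug the same boundary tentacle that originally separated $y_0$ from $x_0$; a suitable choice of reference point $\gamma_0(\tau)$, e.g. a minimizer of $\rho$ on $\gamma_0\cap Q_k$, is required to realize this bound, and this is the most delicate part of the argument.
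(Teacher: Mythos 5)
Your overall strategy is the same as the paper's: shortcut $\gamma_0$ by straight segments inside Whitney cubes and show the new curve satisfies separation with constant $5C_s$, using that a Whitney cube $Q$ has $\mathrm{diam}(Q)\le\rho(y)$ and $\rho(y)/\rho(z)\le 2$ for all $y,z\in Q$. Your greedy reorganization (jump to the last $\gamma_0$-visit of the current cube) is a minor variant of the paper's construction, which replaces $\gamma_0([t_l,t_u])$ for every cube hit in more than one component; either produces a curve meeting each Whitney cube in a single arc. So the route is essentially the paper's, but the verification of separation for the new curve, which you yourself flag as the delicate part, has a real gap.

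The gap is the choice of the reference point $\gamma_0(\tau)$. You propose a $\rho$-minimizer on $\gamma_0\cap Q_k$, but the binding constraint on $\tau$ is \emph{temporal}, not metric: the separation property of $\gamma_0$ at $\gamma_0(\tau)$ only makes a statement about points on $\gamma_0([0,\tau])$, so $\tau$ must not precede the time of the transferred point $y_0$. A $\rho$-minimizer over $\gamma_0\cap Q_k$ may well lie on an earlier excursion of $\gamma_0$ into $Q_k$, in which case the separation statement you invoke says nothing about $y_0$. The paper avoids this by taking $\tau=t_l$, the \emph{first} visit time of $\gamma_0$ to the cube in question, which is the earliest admissible reference; and since $\rho$ varies only by a factor $2$ across the cube, there is nothing gained by minimizing $\rho$. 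With $\tau=t_l$ the ball inclusion is immediate: $|\gamma_0(t_l)-\gamma(t)|+C_s\rho(\gamma_0(t_l))\le\rho(\gamma(t))+2C_s\rho(\gamma(t))\le 3C_s\rho(\gamma(t))<5C_s\rho(\gamma(t))$.

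Two smaller points. The ``topological transfer'' you worry about is actually elementary once $B_0\subset B$ and $y_0,x_0\notin B$: any path from $y_0$ to $x_0$ in $\Omega\setminus\partial B$ must remain outside $\overline{B}$, hence avoids $\partial B_0\subset\overline{B}$, so separation at $B_0$ passes verbatim to $B$. And when $y$ sits on an earlier shortcut segment rather than on $\gamma_0$, you cannot pick an arbitrary nearby $y_0$: you must take the endpoint of that segment which is itself outside $B$ and joined to $y$ by a sub-segment outside $B$ (this endpoint exists because $B$ is convex, so the part of the segment outside $B$ has at most two components, each containing one endpoint). The paper carries out both steps explicitly; without them the transfer does not close.
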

\begin{proof}
Let  $x\in \Omega$ be a point and let $\gz:[0, 1]\to \Omega$ with $\gz(0)= x$,
$\gz(1)= x_0$ be a curve given by the separation condition. We may assume the curve $\gz$ has no self-intersecting points,
otherwise, if there exist $0\le r<t\le 1$ such that $\gz(r)=\gz(t)$, then one can modify the curve $\gz$ as $\tilde \gz: [0,r]\cup [t,1]\mapsto \Omega$,
and $\tilde \gz$ is a curve that connects $x$ and $x_0$ satisfying the separation property with the constant $C_s$.

Now consider the collection $W_\gz$ of all Whitney
cubes that intersect $\gamma$. Notice that since $\gz$ is  a compact set in $\Omega$, such a  collection $W_\gz$
is  finite. If for each $Q\in W_\gz$, $Q\cap \gz$ has at most one
component, then we are done.
Otherwise, for each cube $Q$ for which $Q\cap \gz$ has more than one component,
let $t_u=\sup\{t:\gz(t)\in Q\}$ and $t_l=\inf\{t:\gz(t)\in Q\}$, and replace the curve $\gz([t_l,t_u])$
by the line segment $\ell_{\gz(t_l),\gz(t_u)}$. In such a way, we obtain a new curve $\tilde \gz$ connecting $x$ to $x_0$.
After reparametrisation, we denote the curve by $\tilde \gz:[0,1]\mapsto \Omega$.

{\bf Claim.} The curve $\tilde \gz$ connecting $x$ and $x_0$ satisfies the separation condition with constant $5C_s$.

In what follows, for convenience, denote $B(\gz(t),C_s\rho(\gz(t)))$ and  $B(\tilde \gz(t),5C_s\rho(\tilde\gz(t)))$ by $B_{\gz,t}$ and $B_{\tilde \gz, t}$ respectively.

Let $t\in [0,1]$. If $\tilde \gz([0,t])\subset B_{\tilde \gz,t}$ then the conclusion follows. Suppose that $\tilde \gz([0,t])\setminus B_{\tilde \gz,t}\neq\emptyset$.

{\bf Case 1.} $\tilde \gz(t)\in \gamma([0,1])$.

Let $t_\gz\in [0,1]$ be such that $\gz(t_\gz)=\tilde \gz(t)$.  Notice that
by the construction of $\tilde \gz$, $\tilde \gz([0,t])\setminus \gz([0, t_\gz])$ only contains line segments, whose endpoints belong to
$\gz([0, t_\gz])$. From this we see that  $\gz([0, t_\gz]) \setminus B_{\tilde \gz,t} \neq\emptyset$, otherwise we have
$\tilde \gz([0,t])\subset B_{\tilde \gz,t}$.

Since $B_{\tilde \gz,t}\supseteq  B_{\gz,t_\gz}$ and $\gz([0, t_\gz]) \setminus B_{\tilde \gz,t} \neq\emptyset$, we have
 $\gz([0, t_\gz]) \setminus B_{\gz,t_\gz} \neq\emptyset$.
By the separation property, each $y\in \gz([0, t_\gz]) \setminus B_{\gz,t_\gz} $  and $x_0$
belong to different component of $\Omega\setminus \partial B_{\gz,t_\gz} $.  Hence, we see that each
$$y\in (\gz([0,t_\gz])\cap \tilde \gz([0,t]))\setminus B_{\tilde \gz,t}\subset  (\gz([0,t_\gz])\cap \tilde \gz([0,t]))\setminus B_{\gz,t_\gz}\subset \gz([0, t_\gz]) \setminus B_{\gz,t_\gz}$$
belongs to a different component of $\Omega\setminus \partial B_{\tilde \gz,t}$ than $x_0$.

Now, if $(\tilde \gz([0,t])\setminus \gz([0, t_\gz])) \setminus  B_{\tilde \gz,t} \neq\emptyset$,
then there is  a line segment $\ell_{\gz(t_l),\gz(t_u)}\setminus  B_{\tilde \gz,t}\neq \emptyset$
and $\ell_{\gz(t_l),\gz(t_u)}\subset (\tilde \gz([0,t])\setminus \gz([0, t_\gz]))$. For each
$y\in \ell_{\gz(t_l),\gz(t_u)}\setminus  B_{\tilde \gz,t}$, there is one end-point of $\ell_{\gz(t_l),\gz(t_u)}$, which we  assume to be $\gz(t_l)$,
that also belongs to $ \ell_{\gz(t_l),\gz(t_u)}\setminus  B_{\tilde \gz,t}$ and connects to $y$ in the segment. Since
$\gz(t_l)\in \gz$ that belongs to a different component of $\Omega\setminus \partial B_{\tilde \gz,t}$ than $x_0$,
 we find that the  set $\ell_{\gz(t_l),\gz(t_u)}\setminus  B_{\tilde \gz,t}$ belong to a different component of $\Omega\setminus \partial B_{\tilde \gz,t}$ than $x_0$, as desired.

{\bf Case 2.} $\tilde \gz(t)\in \tilde \gz([0,1])\setminus \gz([0,1])$.

 By the construction of the curve $\tilde \gz$, we see that there
is a line segment $\ell_{\gz(t_l),\gz(t_u)}\subset \tilde \gz([0,1])$, that contains $\tilde \gz(t)$ and belongs to a Whitney cube $Q$.
Notice that the length of $\ell_{\gz(t_l),\gz(t_u)}$
is no bigger than $\sqrt n\ell({Q})$, which is no bigger than $\rho(\tilde\gz(t))$ or $\ro(\gz(t_l))$. Therefore, $\ell_{\gz(t_l),\gz(t_u)}\subset B_{\tilde \gz,t}$,
$$\rho(\gz(t_l))\le d(Q,\partial \Omega)+\sqrt n\ell(Q)\le 2\rho(\tilde \gz(t))$$ and
$$B_{\gz,t_l}=B(\gz(t_l), C_s\rho(\gz(t_l)))\subset B(\tilde \gz(t), 5C_s\rho(\tilde\gz(t)))=B_{\tilde \gz,t}.$$

If $y\in \tilde \gz([0,t])\setminus B_{\tilde \gz,t}$ and $y\in \gz([0,1])$, then
$$y\in \gz([0,t_l]) \setminus B_{\tilde \gz,t}\subset  \gz([0,t_l]) \setminus B_{\gz,t_l}.$$
Since $\gz$ satisfies the separation property, this implies that $y$ and  $x_0$ belong to different components of $\Omega\setminus \partial B_{\gz,t_l}$, and further
$y$ and $x_0$ belong to different components of
$\Omega\setminus \partial B_{\tilde\gz,t}$.

If $y\in \tilde \gz([0,t])\setminus B_{\tilde \gz,t}$ and $y\notin \gz([0,1])$, then by the construction of the curve $\tilde \gz$ again,
we see that there is a line segment $\ell_{\gz(\tilde t_l),\gz(\tilde t_u)}\subset \tilde \gz([0,t])$ such that
$y\in \ell_{\gz(\tilde t_l),\gz(\tilde t_u)}$.
Arguing as in {\bf Case 1}, we see that there is one end-point of $\ell_{\gz(\tilde t_l),\gz(\tilde t_u)}$, assuming that it is $\gz(\tilde t_l)$,
that belongs to  $\tilde \gz([0,t])\setminus B_{\tilde \gz,t}$ and connects to $y$ in the segment.

Since
 $\gz(\tilde t_l)\in \tilde \gz([0,t])\setminus B_{\tilde \gz,t}$ and $\gz(\tilde t_l)\in\gz([0,1])$, we can conclude  that
$y$ and  $\gz(\tilde t_l)$ belong to the same component of $\Omega\setminus \partial B_{\tilde\gz,t}$, while $x_0$
belongs to another component, as desired.
\end{proof}

\begin{thm}\label{john-geo}
 Let $\Omega\subset\rn$ be a domain satisfying the separation property. Then
 $\Omega$ is a John domain if and only if there exists a positive constant $C_E$ such that for every separating ball $B$,
  it holds that
  \begin{equation}
   \label{measurecondition}
\abs{E_B}\leq C_E \abs{B}
\end{equation}
for $B-$end $E_B.$
\end{thm}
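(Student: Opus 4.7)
I would prove the two directions separately. The forward direction, John $\Rightarrow$ measure condition, is the classical one: fix a separating ball $B = B(\gamma(t_0), C_s\rho(\gamma(t_0)))$. For any $y \in E_B$, a John curve $\tilde\gamma_y$ from $y$ to $x_0$ must cross $\partial B$ at some time $s_0$, since $y$ and $x_0$ belong to different components of $\Omega \setminus B$. Combining the John inequality $\rho(\tilde\gamma_y(s_0)) \geq C_J s_0$ with $\rho(\tilde\gamma_y(s_0)) \leq (1 + C_s)\rho(\gamma(t_0))$, I obtain $s_0 \leq (1 + C_s)\rho(\gamma(t_0))/C_J$, and the triangle inequality then places $y$ in a ball of radius comparable to $\rho(\gamma(t_0))$ centred at $\gamma(t_0)$, yielding $|E_B| \leq C_E|B|$.

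For the converse, fix $x \in \Omega$ and let $\gamma : [0,L] \to \Omega$ be the arc length parametrization of the separation curve from $x$ to $x_0$ provided by Proposition~\ref{separation}, which meets each Whitney cube in one component. Write $Q_s$ for the Whitney cube containing $\gamma(s)$, $r_s := \rho(\gamma(s))$, and $B_s := B(\gamma(s), C_s r_s)$. The single-component property bounds the length of $\gamma$ inside each Whitney cube $Q$ by $\sqrt n\,\ell(Q)$, so to verify the John condition it suffices to show $\sum_{Q \in \mathcal W_t}\ell(Q) \leq C r_t$, where $\mathcal W_t = \{Q_s : s \leq t\}$.

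The key step is a \emph{quasi-monotonicity of cube sizes} along $\gamma$: there is $C_0 > 0$ such that $\ell(Q_\tau) \geq C_0\,\ell(Q_\sigma)$ whenever $0 \leq \sigma < \tau \leq L$. If this failed, the ball $B_\tau$ would be too small to meet $Q_\sigma$ (any $y \in B_\tau$ satisfies $\rho(y) \leq (1+C_s)r_\tau$, whereas $\rho(Q_\sigma) \geq \sqrt n\,\ell(Q_\sigma)$ by Whitney), so by separation at $\tau$ the point $\gamma(\sigma) \in Q_\sigma$ lies in $E_{B_\tau}$, connectedness of $Q_\sigma$ traps the whole cube there, and the measure condition yields $|Q_\sigma| \leq C_E|B_\tau| \leq C\ell(Q_\tau)^n$, a contradiction. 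From quasi-monotonicity, for any $\lambda > 0$ every $Q \in \mathcal W_t$ with $\ell(Q) \leq \lambda$ is visited at a time $\leq s_\lambda := \sup\{s \leq t : \ell(Q_s) \leq \lambda\}$, and all cubes visited on $[0,s_\lambda]$ have side $\leq \lambda/C_0$. Applying separation at $s_\lambda$, each such cube either lies inside $B_{s_\lambda} \cup E_{B_{s_\lambda}}$ (of measure $\leq (1+C_E)|B_{s_\lambda}| \leq C\lambda^n$) or crosses $\partial B_{s_\lambda}$ (hence lies in a $\lambda$-shell of volume $\leq C\lambda^n$), giving $\sum_{Q \in \mathcal W_t,\,\ell(Q) \leq \lambda}|Q| \leq C\lambda^n$.

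To conclude, I would order the cubes of $\mathcal W_t$ by decreasing side length as $\ell_1 \geq \ell_2 \geq \cdots$; the previous estimate reads $\sum_{j \geq k}\ell_j^n \leq C\ell_k^n$ for every $k$. Lemma~\ref{astala-gehring} with $\alpha = 1/n$ then gives $\sum_{j \geq k}\ell_j \leq c\ell_k$, while quasi-monotonicity at $\tau = t$ forces $\ell_1 \leq \ell(Q_t)/C_0 \leq Cr_t$; hence $t \leq \sqrt n\sum_j \ell_j \leq Cr_t$ as desired. The main obstacle will be the quasi-monotonicity step: $E_B$ can be geometrically quite intricate, and the hypothesis $|E_B| \leq C_E|B|$ is used precisely to rule out the scenario where $\gamma$ enters a large Whitney cube and then backtracks into a much smaller one.
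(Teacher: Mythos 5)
Your proposal is substantially correct and takes a route that is genuinely different in its organization from the paper's, though it rests on the same underlying geometric facts and on Lemma~\ref{astala-gehring}. The paper keeps the Whitney cubes in the order they appear along the separation curve (a chain $\{Q_j\}$ with $x_0\in Q_0$) and, for each $t$, directly verifies the tail hypothesis $\sum_{j\ge k_t}\ell(Q_j)^n\le C\ell(Q_{k_t})^n$ by a single dichotomy at $\gamma(t)$: the portion $\gamma([0,t])$ is either inside $B_{\gamma,t}$ or escapes into $E_{B_{\gamma,t}}$, and the cubes partially outside $B_{\gamma,t}$ are controlled through the measure condition applied to half-balls. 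You instead first isolate a \emph{quasi-monotonicity of Whitney scales} along $\gamma$ (earlier cubes cannot be essentially larger than later ones), then deduce the scale-stratified estimate $\sum_{\ell(Q)\le\lambda}|Q|\le C\lambda^n$ and reorder the cubes by decreasing size before applying Astala--Gehring. The quasi-monotonicity argument is correct (if $\ell(Q_\sigma)$ were too large relative to $\ell(Q_\tau)$, Whitney disjointness puts $Q_\sigma$ outside $\overline{B_\tau}$, the separation dichotomy at $\tau$ forces $Q_\sigma\subset E_{B_\tau}$, and $|E_{B_\tau}|\le C_E|B_\tau|$ gives the contradiction), and it is a pleasant structural fact that the paper proves only implicitly. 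It is used at the end to control $\ell_1$; the stratified bound itself actually follows from the separation dichotomy at $s_\lambda$ together with the Whitney estimate for cubes meeting $\overline{B_{s_\lambda}}$, without invoking quasi-monotonicity separately, so there is some redundancy, but nothing incorrect.

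There is one genuine slip. You write that ``the single-component property bounds the length of $\gamma$ inside each Whitney cube $Q$ by $\sqrt n\,\ell(Q)$'' and conclude at the end that $t\le\sqrt n\sum_j\ell_j$. This is false: a curve meeting a cube in one connected arc can still have arbitrarily large arclength inside it; Proposition~\ref{separation} straightens multiple-visit cubes into segments but does nothing to the length of a single-visit arc. What the covering gives you is a bound on $\diam\gamma([0,t])\le\sqrt n\sum_{Q\in\mathcal W_t}\ell(Q)$, not on the arclength parameter. The fix is exactly what the paper does: appeal to the well-known equivalent ``diameter'' form of the John condition, namely that it suffices to show $\rho(\gamma(t))\ge C\diam\gamma([0,t])$ (see \cite[pp.\,385--386]{ms79} or \cite[pp.\,7--8]{nv91}), and then $\diam\gamma([0,t])\le\sqrt n\sum_j\ell_j\le Cr_t$ closes the argument. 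With this replacement your proof is complete.
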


\begin{proof}
It is quite standard to prove \eqref{measurecondition} assuming John condition (see \cite[Theorem 2.1]{bk95}).
For the converse implication we modify the ideas from \cite[p. 18]{kos15} and \cite{bk95}.

Let  $x\in \Omega$ be a point and let $\gz:[0, 1]\to \Omega$ with $\gz(0)= x$,
$\gz(1)= x_0$ be a curve given by the separation condition. From the previous proposition,
we may assume that for each Whitney cube $Q$ the intersection $Q\cap \gz$ has at most one component.

To prove that $\Omega$ is a John domain it suffices to show that
$$\rho(\gz(t))=d(\gamma(t),\partial \Omega)\geq C \diam(\gamma([0,t])),$$
 see \cite[pp.385-386]{ms79} or \cite[pp.7-8]{nv91}.

If $x\in B(x_0,\rho(x_0))$, then one can take the line segment connecting $x$ and $x_0$, and the conclusion is obvious.
Suppose now $x\notin B(x_0,\rho(x_0))$.  Consider the collection $W_\gz$ of all Whitney
cubes that intersect $\gamma$. Since for each Whitney cube $Q$ in $W_\gz$,  there is only one
component in $Q\cap\gz$ by our reduction above,  we can order them so that they
form a chain $\{Q_j\}_{j=0}^{W_x}$ with $x_0\in   Q_0$ and $W_x\in \cn$ depends on $x$.
Notice that each cube is only numbered once.

For each $t\in [0,1]$, let $k_t$ ($0\le k_t\le W_x$) be the smallest number such that $\gz(t)\in Q_{k_t}$.
For the point  $\gz(t)$, $t\in [0,1]$,
the separation condition implies either $\gz([0, t])\subset B_{\gz,t}$,  $B_{\gz,t}:=B(\gz(t),C_s\rho(\gz(t)))$, or  every $y\in \gamma([0,t])\setminus B_{\gamma,t}$ and $x_0$ lie in different components of $\Omega\setminus \partial B_{\gz,t}$ than $x_0$.

{\bf Case 1.} Suppose that the former case happens.  Then $\gz([0, t])\subset B_{\gz,t}$, and we have  for each $r<t$ that
$$\rho(\gz(r))\le  C_s\rho(\gz(t))+\rho(\gz(t)),$$
and for each $Q_j\in W_\gz$ that contains $\gz(r)$,
\begin{equation}\label{3.1}
Q_j\subset B(\gz(r), \sqrt n\ell(Q_j))\subset B(\gz(r), \rho(\gz(r)))\subset B(\gz(t),(2C_s+1)\rho(\gz_t))\subset B(\gz(t),3C_s\rho(\gz_t)).
\end{equation}
This together with the definition of the chain $\{Q_j\}_{j=0}^{W_x}$ implies that
$\cup_{j=k_t}^{W_x}Q_j\subset B(\gz(t),3C_s\rho(\gz_t)),$ and hence,
\begin{equation}
\label{cond1}
 \sum_{j=k_t}^{W_x} \ell(Q_j)^n \leq C(n,C_s) \ro(\gz(t))^n\leq C(n,C_s) \ell(Q_{k_t})^n.
\end{equation}

{\bf Case 2.} Suppose that every $y\in \gamma([0,t])\setminus B_{\gz,t}$ lies in a different component of
$\Omega\setminus \partial B_{\gz,t}$ than $x_0$.  We claim that in this case
$$\bigcup_{j=k_t}^{W_x}Q_j\subset E_{B_{\gz,t}}\cup B(\gz(t), (3+2\sqrt n C_E)C_s\rho(\gz_t)).$$
Notice that in this case we have $\gamma([0,t])\subset E_{B_{\gz,t}}\cup B_{\gz,t}.$
For the cubes $\{Q_j\}_{j=k_t}^{W_x}$, if $Q_j\cap \gamma([0,t]) \cap B_{\gz,t}\neq\emptyset$,
then  the same argument as in proving \eqref{3.1} gives that
$$Q_j\subset  B(\gz(t), 3C_s\rho(\gz_t))\subset E_{B_{\gz,t}}\cup B(\gz(t), (3+2\sqrt n C_E)C_s\rho(\gz_t)).$$

For each cube $Q_j$ that $Q_j\cap \gamma([0,t]) \cap B_{\gz,t}=\emptyset$,   there is $r\in [0,t]$ such that $\gz(r)\in Q_j$ and $\gz(r)\in E_{B_{\gz,t}}$.
If  $Q_j\subset E_{B_{\gz,t}}$, then we also have $Q_j\subset E_{B_{\gz,t}}\cup B(\gz(t), (3+2\sqrt n C_E)C_s\rho(\gz_t))$.

It remains to consider the case $Q_j\setminus E_{B_{\gz,t}}\neq \emptyset$.
Notice that now $Q_j\cap B_{\gz,t}\neq\emptyset$. From Lemma \ref{whitney} and $\gz(r)\in Q_j$, we deduce that
$$\sqrt n\ell({Q_j})\le \rho(\gz(r))\le d(Q_j,\partial\Omega)+\sqrt n\ell{(Q_j)}\le 5\sqrt n \ell(Q_j).$$
This implies that $B(\gz(r),\ell({Q_j}))$ is compactly contained in $\Omega$, further, since the center  $\gz(r)$ is in
$E_{B_{\gz,t}}$, we can conclude that at least half of $B(\gz(r),\ell({Q_j}))$ belongs to $E_{B_{\gz,t}}$,
and from the assumption that
$$\frac 12|B(\gz(r),\ell({Q_j}))|\le |B(\gz(r),\ell({Q_j}))\cap E_{B_{\gz,t}}|\le |E_{B_{\gz,t}}| \le C_E|B_{\gz,t}|,$$
and therefore $\ell(Q_j)\le (2C_E)^{1/n}C_s\rho(\gz(t))$.
This and $Q_j\cap B_{\gz,t}\neq \emptyset$ give $Q_j\subset  B(\gz(t),2\sqrt n C_E C_s\rho(\gz_t)) $, and hence completes the proof
of
$$\bigcup_{j=k_t}^{W_x}Q_j\subset E_{B_{\gz,t}}\cup B(\gz(t), (3+2\sqrt n C_E)C_s\rho(\gz_t)).$$

Finally, by the non-overlap property of Whitney cubes, Lemma \ref{whitney},
we conclude that
\begin{eqnarray}\label{cond2}
 \sum_{j\geq k_t}\ell(Q_j)^n&&=\sum_{j=k_t}^{W_x} |Q_j|\leq \abs{E_{B_{\gz,t}}} + \abs{B(\gz(t), (3+2\sqrt n C_E)C_s\rho(\gz_t))}\nonumber\\
 &&\leq C(C_s,n,C_E)\rho(\gamma(t))^n\leq C(C_s,n,C_E)\ell(Q_{k_t})^n.
\end{eqnarray}

The estimates \eqref{cond1} and \eqref{cond2} with Lemma \ref{astala-gehring} imply  that for all $t\in [0,1]$
\begin{equation}
 \sum_{j\geq k_t}\ell(Q_j)\leq C \ell(Q_{k_t}).
\end{equation}
Since the cubes $\{Q_i\}_{i\ge k_t}$ cover the curve $\gamma([0,t])$, we find that
$\diam \gamma([0,t])\leq C \rho(\gamma(t))$ for all $t\in [0,1]$, as desired.
\end{proof}

\section{Korn inequality $(K_p)$ implies John condition}
\hskip\parindent  In this and the next section, we provide the proof of the main result of the paper.
\begin{thm}\label{korn-john-2}
Let $\Omega$ be a bounded domain of $\rr^n$, $n\ge 2$. Let
$1<p<\infty$.
Suppose that
for all $\mathbf{v}\in W^{1,p}(\Omega,\rn)$ satisfying
$\int_\Omega \kappa_{i,j}(\mathbf{v})\,dx=0,$ $1\le i,j\le n$,
it holds that
$$\lf\|D\mathbf{v}\r\|_{L^p(\Omega)}\le C_K\|\epsilon(\mathbf{v})\|_{L^p(\Omega)}.
\leqno ({{K}_{p}})$$
Then if  $\Omega$  satisfies the separation property, $\Omega$ is a John domain.
\end{thm}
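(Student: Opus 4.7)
The plan is to argue by contrapositive via the geometric characterization of John domains in Theorem~\ref{john-geo}: if $\Omega$ fails to be John, then for each $M>0$ there is a separating ball $B=B(z,C_s\rho(z))$ with $|E_B|>M|B|$, and from such a $B$ I will construct a test vector field $\mathbf{u}\in W^{1,p}(\Omega,\rn)$ that contradicts $(K_p)$ once $M$ is taken large enough. At the outset I may replace $x_0$ with a point of $\Omega$ where $\rho$ attains its supremum; the separation property is preserved under this replacement (with an adjusted constant).

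With $r=C_s\rho(z)$ the radius of $B$, choose a cut-off $\eta\in C^\infty(\overline{\Omega})$ with $\eta\equiv 1$ on $E_B$, $\eta\equiv 0$ on $F_B\cup(\Omega\setminus(E_B\cup B))$, $0\le\eta\le 1$, and $|\nabla\eta|\le C/r$. Fix a non-zero anti-symmetric matrix $S_0$ with $|S_0|=1$ and set $\mathbf{u}(x):=\eta(x)\,S_0(x-z)$. Because $S_0$ is anti-symmetric, $\epsilon(S_0(x-z))\equiv 0$, so $\epsilon(\mathbf{u})$ is supported in the transition region $\{\nabla\eta\ne 0\}\subset B$, where $|\epsilon(\mathbf{u})|\le C|S_0|\,|x-z|\,|\nabla\eta|\le C$; hence $\|\epsilon(\mathbf{u})\|_{L^p(\Omega)}^p\le C|B|$, while $D\mathbf{u}=S_0$ on $E_B$ and $D\mathbf{u}=0$ on $F_B$. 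Setting $\overline{S}:=(\kappa(\mathbf{u}))_\Omega$, the field $\mathbf{u}-\overline{S}x$ satisfies the zero-mean requirement of $(K_p)$; since $\epsilon(\overline{S}x)\equiv 0$, applying $(K_p)$ yields the translation-invariant bound
$$\inf_{S=-S^T}\|D\mathbf{u}-S\|_{L^p(\Omega)}\le C_K\,\|\epsilon(\mathbf{u})\|_{L^p(\Omega)}.$$

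For any anti-symmetric $S$, combining $D\mathbf{u}-S\equiv S_0-S$ on $E_B$ and $D\mathbf{u}-S\equiv -S$ on $F_B$ with the elementary bound $|S_0-S|^p+|S|^p\ge 2^{1-p}|S_0|^p$ gives
$$\|D\mathbf{u}-S\|_{L^p(\Omega)}^p\ge |S_0-S|^p|E_B|+|S|^p|F_B|\ge 2^{1-p}\min(|E_B|,|F_B|),$$
so Korn's inequality forces $\min(|E_B|,|F_B|)\le C'|B|$ for some $C'=C'(p,n,C_K)$. To convert this into $|E_B|\le C'|B|$ I plan to establish the following geometric lemma: for the choice $\rho(x_0)=\sup_\Omega\rho$ and every separating ball $B$ with $x_0\notin B$, one has $|F_B|\ge c_0|B|$, where $c_0=c_0(n,C_s)>0$. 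The proof is a short case analysis on whether $B$ meets $B(x_0,\rho(x_0))$: in the disjoint case $B(x_0,\rho(x_0))\subset F_B$ gives $|F_B|\ge\omega_n\rho(x_0)^n\ge|B|/C_s^n$; when the two balls intersect, the $1$-Lipschitz property of $\rho$ forces $\rho(z)\asymp\rho(x_0)$ and thereby $|B|\asymp\rho(x_0)^n$, while a cone inside $B(x_0,\rho(x_0))$ pointing away from $z$ lies in $F_B$ and has volume comparable to $\rho(x_0)^n$.

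The main obstacle is that the purely geometric constant $c_0$ need not exceed the Korn-derived $C'$, so the min-bound does not immediately identify $|E_B|$ as the smaller quantity. In the delicate regime $|F_B|\le|E_B|$, I plan to refine the test field by adjoining a compensating rigid rotation supported in a small subregion of $F_B$ near $x_0$, chosen so that the mean $\overline S$ is already small and the refined lower estimate takes the form $\|D\mathbf{u}-\overline{S}\|_{L^p(\Omega)}^p\gtrsim|E_B|\,|F_B|\bigl(|E_B|^{p-1}+|F_B|^{p-1}\bigr)/|\Omega|^p$, which, combined with the lemma, delivers the bound $|E_B|\le C''|B|$ and contradicts $|E_B|>M|B|$ for $M$ sufficiently large.
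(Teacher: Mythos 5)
Your opening moves match the paper's: invoke Theorem~\ref{john-geo} to reduce to a measure bound $\abs{E_B}\lesssim \abs{B}$, and test $(K_p)$ against a cut-off rigid rotation anchored at the center of a separating ball. The construction of $\mathbf{u}=\eta\, S_0(x-z)$ and the estimate $\|\epsilon(\mathbf{u})\|_{L^p}^p\lesssim\abs{B}$ are fine and parallel the paper's $\mathbf{v}$.

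The genuine gap is exactly the one you flag at the end and do not close. Passing to the infimum form $(\widetilde K_p)$ is legitimate, but it surrenders control over which antisymmetric $S$ is chosen, and your lower bound only delivers $\min(\abs{E_B},\abs{F_B})\le C'\abs{B}$. Your proposed remedy --- a geometric lemma $\abs{F_B}\ge c_0\abs{B}$ --- cannot rescue this, since $c_0$ is a fixed geometric quantity while $C'$ depends on $C_K,p,n$ and may dwarf it; so in the regime $\abs{F_B}\le\abs{E_B}$ the $\min$-bound says nothing about $\abs{E_B}$. Moreover, the intersecting case of that lemma is itself shaky: $B\cap B(x_0,\rho(x_0))\neq\emptyset$ together with the $1$-Lipschitz property of $\rho$ does \emph{not} force $\rho(z)\asymp\rho(x_0)$; one can easily have $\rho(z)\ll\rho(x_0)$ while the small ball $B=B(z,C_s\rho(z))$ still touches $B(x_0,\rho(x_0))$. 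Finally, the ``refined test field'' and the claimed estimate $\|D\mathbf{u}-\overline S\|_{L^p}^p\gtrsim \abs{E_B}\abs{F_B}(\abs{E_B}^{p-1}+\abs{F_B}^{p-1})/\abs{\Omega}^p$ appear with no derivation; as stated it is not clear how adjoining a rotation localized near $x_0$ (which necessarily contributes its own $\epsilon\neq0$ on the boundary of that subregion) would yield it without also inflating the right-hand side of Korn.

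The paper resolves this differently, and the difference is the crux. Instead of taking the infimum over $S$, the paper keeps the specific normalization $\int_\Omega\kappa_{i,j}(\mathbf{u})\,dx=0$, realized by adding an explicit global rotation $\mathbf{w}$ with coefficient $\tilde C$, and then \emph{bounds $\tilde C$ itself using the Korn inequality}. Two ingredients make this work: (a) a \emph{fixed} reference cube $Q\subset\subset\Omega$ containing $x_0$, on which $\mathbf{v}\equiv 0$ so that $\|D\mathbf{u}\|_{L^p(Q)}=\tilde C\abs{Q}^{1/p}$, giving a lower bound independent of the ball $B$; and (b) an a~priori reduction to the regime $\abs{B}<\abs{Q}/(C_K^p3^{n+2p})$ (the opposite case being trivial since then $\abs{E_B}\le\abs{\Omega}\lesssim\abs{B}$). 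Feeding the lower bound on $Q$ and the Korn upper bound $\|D\mathbf{u}\|_{L^p}\le 3C_K\abs{2B}^{1/p}$ into each other forces $\tilde C<1/3$, so the compensating rotation cannot cancel $D\mathbf{v}$ on $E_B\setminus 2B$, and one concludes $\abs{E_B\setminus 2B}\le 3^{2p+n}C_K^p\abs{B}$. That smallness-of-$\tilde C$ step, obtained by exploiting a set of \emph{fixed} measure where the test field vanishes together with the small-ball reduction, is the missing idea in your write-up; replacing your infimum argument and the two unproved lemmas with it closes the proof.
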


\begin{proof}Suppose that $\Omega$  satisfies the separation property w.r.t. $x_0\in \Omega$.
Let $Q$ be the Whitney cube such that $x_0\in Q\subset\subset \Omega$.

By Theorem \ref{john-geo}, to show that $\Omega$ is a John domain,
it suffices to show that there exists an absolute constant $C_E>0$
such that for each separating ball $B$, $B=B(z,r)$ with $z\in \Omega$, its end $E_B$,
if exists, has the property  $|E_B|\le C_E|B|$.

If $B\cap Q\neq \emptyset$, then by $B\cap \partial \Omega\neq \emptyset$ we see that  $r\ge \sqrt n\ell(Q)/2$.
In this case, it holds that $|E_B|\le |\Omega|\le C(\Omega,Q)|B|$.

Suppose now that $B\cap Q=\emptyset$. Let $E_B$ be the $B$-end.  If
$|B|\ge {|Q|}/({C_K^p3^{n+2p}}),$
then it holds that
$|E_B|\le |\Omega|\le C(\Omega,Q,C_K)|B|.$
Therefore we may assume that
$$|B|< \frac{|Q|}{C_K^p3^{n+2p}}.$$

If $E_B\subset B(z,4r)$ or $|E_B|\le 4^n|B|$ then the conclusion is obvious. Otherwise,
set
\begin{equation}\label{test-phi}
\phi(x):=\ \
\begin{cases}
0, \hspace{2cm}  \forall x\in \Omega\setminus E_B;\\
1, \hspace{2cm}  \forall x\in E_B\setminus B(z,2r);\\
\frac{d(x,B(z,r))}{r}, \hspace{0.8cm} \forall x\in E_B\cap (B(z,2r)\setminus B(z,r)).
\end{cases}
\end{equation}
Then $\phi$ is a Lipschitz function, with Lipschitz constant being $1/r$, that vanishes on $B\cap \Omega$.

For each $x=(x_1,\cdots,x_n)\in \Omega$, let $\mathbf{v}=(v_1,v_2,0,\cdots,0)$ with
\begin{equation}\label{test-v}
\lf\{
\begin{array}{ccc}
v_1(x_1,\cdots,x_n)&=& (x_2-z_2) \phi(x_1,\cdots,x_n),\\
v_2(x_1,\cdots,x_n)&=&(z_1-x_1)\phi(x_1,\cdots,x_n),
\end{array}
\r.
\end{equation}
where $z=(z_1,\cdots,z_n)$ is the center of $B$.
Then for each $x=(x_1,\cdots,x_n)\in E_B\setminus B(z,2r)$,
\begin{eqnarray}\label{value-v}
D\mathbf{v}(x)=\lf(
\begin{array}{ccccc}
0 &  1 & 0& \cdots & 0\\
-1 & 0 & 0&\cdots & 0\\
0 & 0 & 0&\cdots & 0\\
\cdots\\
0 & 0 & 0&\cdots & 0
\end{array}
\r),
\end{eqnarray}
$D\mathbf{v}(x)= 0$ for all $x\in \Omega\setminus E_B$ and
$$|D\mathbf{v}(x)|\le 2r|\nabla \phi(x)|+\phi(x)\le 3$$
for all $x\in E_B\cap (B(z,2r)\setminus B(z,r))$.

Since now $|E_B|>4^n|B|$, $|E_B\setminus B(z,2r)|\ge |E_B|-|2B|>3|B(z,2r)|$.
Then from the construction of $\mathbf{v}$ we conclude that
$$\int_\Omega \frac{\partial v_1}{\partial x_2}-\frac{\partial v_2}{\partial x_1}\,dx=2\int_{E_B\setminus B(z,2r)}\,dx+\int_{E_B\cap B(z,2r)} \frac{\partial v_1}{\partial x_2}-\frac{\partial v_2}{\partial x_1}\,dx>0,$$
since $\int_{E_B\cap B(z,2r)} \frac{\partial v_1}{\partial x_2}-\frac{\partial v_2}{\partial x_1}\,dx>-6|B(z,2r)|$.
Choose a vector field $\mathbf{w}$ on $\Omega$ as
$$\mathbf{w}(x)=\mathbf{w}(x_1,\cdots,x_n)=(-\tilde Cx_2,\tilde Cx_1,0,\cdots,0),$$
where $\tilde C$ satisfies
$$2\tilde C|\Omega|=\int_\Omega \frac{\partial v_1}{\partial x_2}-\frac{\partial v_2}{\partial x_1}\,dx.$$
Now set $\mathbf{u}=\mathbf{v}+\mathbf{w}$. One has that  $\mathbf{u}=(u_1,u_2,0,\cdots)$,
where $u_i=v_i+w_i$, $i=1,2$, is Lipschtiz continuous on $\Omega$
and satisfies
$$\int_\Omega \frac{\partial u_1}{\partial x_2}-\frac{\partial u_2}{\partial x_1}\,dx=\int_\Omega \frac{\partial v_1}{\partial x_2}-\frac{\partial v_2}{\partial x_1}\,dx-2\tilde C|\Omega|=0.$$
Applying the Korn inequality $({{K}_{p}})$ to $\mathbf{u}$, and noticing that $\epsilon(\mathbf{w})\equiv 0$, we obtain
\begin{equation}\label{est-gra}
\|D\mathbf{u}\|_{L^p(\Omega)}\le C_K\|\epsilon(\mathbf{u})\|_{L^p(\Omega)}=C_K\|\epsilon(\mathbf{v})\|_{L^p(\Omega)}\le 3C_K|B(z,2r)|^{1/p}.
\end{equation}
By the construction of $\mathbf{v}$ we have $\mathbf{v}=0$ on $\Omega\setminus E_B$, and therefore,
\begin{equation}
\tilde C|Q|^{1/p}\le \tilde C|\Omega\setminus E_B|^{1/p}\le \|D\mathbf{u}\|_{L^p(\Omega)}\le 3C_K|B(z,2r)|^{1/p}< 3C_K\left(\frac{2^n|Q|}{C_K^p3^{n+2p}}\right)^{1/p}.
\end{equation}
From this, we see that
$\tilde C<1/3$.
By this, \eqref{est-gra} and \eqref{value-v},
we can conclude that
\begin{eqnarray*}
\frac{2^p}{3^p}|E_B\setminus B(z,2r)|&&\le\lf\|D\mathbf{u}\r\|_{L^p(\Omega)}^p \le 3^pC_K^p|B(z,2r)|.
\end{eqnarray*}
Hence, we find that $|E_B\setminus B(z,2r)|\le 3^{2p+n}C_K^p|B|$ and hence, $|E_B|\le C(n,C_K,p)|B|$.
Now applying Theorem \ref{john-geo}, we finally
obtain that $\Omega$ is a John domain.
\end{proof}

\section{Korn inequality $(\widehat{K}_p)$ implies John condition}
\hskip\parindent In this section,  we show that the Korn inequality $(\widehat{K}_p)$ also implies John condition, and therefore
complete the proof of Theorem \ref{main}. This requires little more work than the first implications.
For the proof we need the following lemma which is well-known for the experts but it seems that there are no proofs published so far.
\begin{lem}
 Let $\Omega\subset \rr^n$ be a domain satisfying the separation property with constant $C_s$ with respect to the distinguished point $x_0.$
Let $x\in\Omega.$ Then $\Omega$ satisfies the separation property also with respect to the distinguished point $x.$
\end{lem}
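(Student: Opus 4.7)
I will produce an explicit constant $C_s'$ such that $\Omega$ satisfies the separation property with respect to $x$ with constant $C_s'$. Fix a curve $\gamma_x\colon[0,1]\to\Omega$ from $x$ to $x_0$ given by the separation property at $x_0$; since $\gamma_x([0,1])$ is a compact subset of $\Omega$, the quantity $\delta:=\min_{\sigma\in[0,1]}\rho(\gamma_x(\sigma))$ is strictly positive. Set $D:=\mathrm{diam}(\Omega)$ and $C_s':=\max\{C_s,\,(1+C_s)D/\delta\}$. For each $y\in\Omega$, let $\gamma_y\colon[0,1]\to\Omega$ be a separation curve from $y$ to $x_0$, and concatenate $\gamma_y$ with the reversal of $\gamma_x$ to produce a curve $\tilde\gamma\colon[0,2]\to\Omega$ from $y$ to $x$.

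The separation condition for $\tilde\gamma$ at $x$ with constant $C_s'$ will be checked in two regimes. When $s\in[1,2]$, the point $\tilde\gamma(s)$ lies on $\gamma_x$, so $\rho(\tilde\gamma(s))\ge\delta$; then the ball $B(\tilde\gamma(s),C_s'\rho(\tilde\gamma(s)))$ has radius at least $D$, contains all of $\Omega$, and the first alternative holds trivially. When $s\in[0,1]$, apply the separation of $\gamma_y$ at $s$ with respect to $x_0$: if the first alternative already holds for $\gamma_y$, it transfers to $\tilde\gamma$, since enlarging the constant only enlarges the containing ball. Otherwise one obtains a separating ball $B_0:=B(\gamma_y(s),C_s\rho(\gamma_y(s)))$ with respect to $x_0$, and I pass to the enlarged ball $B':=B(\gamma_y(s),C_s'\rho(\gamma_y(s)))\supseteq B_0$.

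The analysis now splits according to whether $x$ and $x_0$ sit in the same component of $\Omega\setminus\partial B_0$. If $x\in E_{B_0}$, then $\gamma_x$ must cross $\partial B_0$ at some point $\gamma_x(\tau^*)$, and the triangle inequality yields $\delta\le\rho(\gamma_x(\tau^*))\le|\gamma_x(\tau^*)-\gamma_y(s)|+\rho(\gamma_y(s))=(1+C_s)\rho(\gamma_y(s))$, so $\rho(\gamma_y(s))\ge\delta/(1+C_s)$; by the choice of $C_s'$, the ball $B'$ then contains $\Omega$ and the first alternative holds. If instead $x$ and $x_0$ lie in a common component of $\Omega\setminus\partial B_0$, I claim $B'$ itself is a separating ball for $\tilde\gamma$ at $\gamma_y(s)$ with respect to $x$. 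Indeed, any $p\in\gamma_y([0,s])\setminus B'\subset\gamma_y([0,s])\setminus B_0\subset E_{B_0}$ must be separated from $x$ in $\Omega\setminus\partial B'$: this is automatic when $x\in B'$, because $p$ and $x$ then sit in the disjoint open sets $\Omega\setminus\overline{B'}$ and $\Omega\cap B'$; otherwise $x\in\Omega\setminus\overline{B'}$, and any hypothetical path from $p$ to $x$ inside $\Omega\setminus\partial B'$ must remain in $\Omega\setminus\overline{B'}\subset\Omega\setminus\overline{B_0}$, which concatenated with a path from $x$ to $x_0$ inside $\Omega\setminus\overline{B_0}$ (which exists since $x,x_0\in\Omega\setminus\overline{B_0}$ share a component of $\Omega\setminus\partial B_0$) would produce a path from $p$ to $x_0$ in $\Omega\setminus B_0$, contradicting $p\in E_{B_0}$.

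The main obstacle is this last sub-case: enlarging the separating ball from $B_0$ to $B'$ could a priori change the component structure of the complement of the sphere. The saving observation is that any path in $\Omega\setminus\partial B'$ starting outside $\overline{B'}$ must remain outside $\overline{B'}\supset\overline{B_0}$, reducing the connectivity question to $\Omega\setminus\overline{B_0}$, where the original separation with respect to $x_0$ supplies the required contradiction.
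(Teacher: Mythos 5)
Your argument is essentially correct, but it is a genuinely different route from the one in the paper. The paper proves a local propagation statement --- if $x_0$ qualifies as a distinguished point with constant $C_s$, then every point of $B(x_0,\rho(x_0)/1000)$ qualifies with constant $2C_s$ --- and then promotes this to all of $\Omega$ by a connectedness (open-and-closed) argument. You instead write down an explicit constant $C_s'=\max\{C_s,(1+C_s)\mathrm{diam}(\Omega)/\delta\}$, with $\delta=\min_\sigma\rho(\gamma_x(\sigma))>0$, and obtain the required curve for every $y$ at once by concatenating $\gamma_y$ with the reversed $\gamma_x$. What your approach buys is directness: no iteration, no clopen argument, and a completely concrete constant. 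What it costs is that $C_s'$ depends rather wildly on $x$ through $\delta$ and on $\mathrm{diam}(\Omega)$, so you need $\Omega$ to be bounded (it always is in this paper, but the paper's own proof does not need it), while the paper's local step keeps a clean constant $2C_s$. The crucial observation you isolate --- that enlarging $B_0$ to $B'\supset B_0$ does not spoil the separation because any arc in $\Omega\setminus\partial B'$ starting outside $\overline{B'}$ must stay in $\Omega\setminus\overline{B'}\subset\Omega\setminus\overline{B_0}$, where the original separation w.r.t.\ $x_0$ applies --- is the same kind of ``bigger separating ball still separates'' reasoning that the paper uses in its latter case, so the proofs are not disjoint in spirit, only in global organization.

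Two small points you should tidy up before this could be spliced into the paper. First, your dichotomy should really be ``$x$ and $x_0$ lie in the same component of $\Omega\setminus\partial B_0$ or not,'' rather than ``$x\in E_{B_0}$ or same component''; as written you do not explicitly cover the case $x\in B_0$ with $x_0$ in a different component of $\Omega\setminus\partial B_0$, nor $x\in\partial B_0$. This is harmless because in every one of those situations $\gamma_x$ meets $\partial B_0$ (or $x$ itself lies on $\partial B_0$), and the same triangle-inequality bound $\rho(\gamma_y(s))\ge\delta/(1+C_s)$ follows, but it should be stated. Second, the boundary configurations $x\in\partial B'$ or $p\in\partial B'$ fall between your sub-cases (a) and (b); one either interprets the separation condition as vacuous when the distinguished point sits on $\partial B'$, or notes that the same chain of inclusions still yields the contradiction, but a sentence acknowledging this would make the argument airtight.
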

\begin{proof} We claim that if the point $x_0$ qualifies for a distinguished point  in the separation property,
then every point in $B(x_0,\rho(x_0)/1000)$ qualifies for a distinguished point  with constant $2C_s.$

By connectedness of $\Omega$, it suffices to prove this claim.
Indeed, letting $E$ be the set of all points qualifying as distinguished points, the  claim yields that $E$ is open.
 Suppose that $F:=\Omega\setminus E\neq \emptyset$. Then there exists $y_0\in  \partial E\cap F$. Take $\tilde y_0\in E$ with $|y_0-\tilde y_0|<\frac{\rho(y_0)}{10000}$.
Then
$$|y_0-\tilde y_0|<\frac{\rho(y_0)}{10000}<\frac{1}{10000}(\rho(\tilde y_0)+|y_0-\tilde y_0|),$$
and $|y_0-\tilde y_0|<\frac{\rho(\tilde y_0)}{9999}$. This implies that $y_0\in E$, since $y_0\in B(\tilde y_0,\frac{\rho(\tilde y_0)}{1000})$. This is a contradiction with our choice of $y_0$, therefore, $F=\emptyset$, and $E=\Omega$.

Let us prove the claim.
Take a point $x\in B(x_0,\rho(x_0)/1000).$ For any point $y\in \Omega$ there exists a curve $\tilde{\gamma}$ joining $x_0$ to $y$ satisfying the condition in the definition of the separation condition. Denote by $\gamma$ the composition of
$\tilde{\gamma}$ and line segment $\ell_{x,x_0}$ joining $x$ and $x_0.$ After a reparametrization we have $\gamma(0)=y$ and $\gamma(1)=x.$

Take a point $w=\gamma(t)$ on the curve $\gamma.$ If this point lies on the line segment $\ell_{x,x_0}$ then we
have $\rho(w)\geq \frac12 \rho(x_0)$ and, therefore, $x,x_0\in B(w,2 C_s \rho(w)).$ This implies the required condition.

If the point $w$ lies on the curve $\tilde{\gamma}$ then we have $w=\tilde{\gamma}(t')$ for some  $t'.$ Since
$\tilde{\gamma}$ is the curve given by the separation condition we have either $\tilde{\gamma}([0,t'])\subset B(w,C_s
\rho(w))$ or every point in $\tilde{\gamma}([0,t'])\setminus B(w,C_s \rho(w))$ belongs to a different component of
$\Omega\setminus \partial B(w,C_s \rho(w))$ than $x_0.$ In the former case we have $\tilde{\gamma}([0,t'])\subset B(w,2C_s
\rho(w)).$

In the latter case, there is nothing to prove if  $x \in B(w,2C_s \rho(w))$. For the remaining case, i.e.,   $x \notin B(w, 2C_s \rho(w))$,
let us argue by contradiction. Assume there exists  $z\in \tilde{\gamma}([0,t'])\setminus B(w,2C_s \rho(w))$ belongs to the same
component of  $\Omega\setminus \partial B(w,2C_s \rho(w))$ as $x$. This implies that $x$ and $z$ belong to a common component of
$\Omega\setminus \partial B(w,C_s \rho(w))$,
and furthermore,  $x$ and $z$ belong to a different component than $x_0$.
Since the line segment $\ell_{x,x_0}$ is contained in $\Omega$, it follows $\ell_{x,x_0}\cap\partial B(w,C_s \rho(w))\neq\emptyset$. Since $x \notin B(w, 2C_s \rho(w))$, we find that
\begin{equation}
\label{sepestimate}
C_s \rho(w)\le \abs{x-x_0}<\rho(x_0)/1000.
\end{equation}

On the other hand, $B(w, C_s \rho(w))$ intersects $\partial\Omega$ and, thus, it holds $\rho(x_0)\le |x_0-w|+C_s\rho(w).$ This with \eqref{sepestimate} gives
$ |x_0-w|>999\rho(x_0)/1000$.

Notice that now, $B(w,2C_s\rho(w))\cap B(x_0,\rho(x_0)/2)=\emptyset$, since for each $\tilde z\in B(x_0,\rho(x_0)/2)$, it holds
$$|\tilde z-w|\ge |w-x_0|-|x_0-\tilde z|>999\rho(x_0)/1000-\rho(x_0)/2>2\rho(x_0)/5>2C_s \rho(w).$$
This is a contradiction with  $\ell_{x,x_0}\cap \partial B(w,C_s \rho(w))\neq \emptyset$ and $\ell_{x,x_0}\subset B(x_0,\frac {\rho(x_0)}{1000})$.  Therefore, there
 is no such $z$, that is, each  $z\in \tilde{\gamma}([0,t'])\setminus B(w,2C_s \rho(w))$ belongs to a different
component of  $\Omega\setminus \partial B(w,2C_s \rho(w))$ than $x$. The proof is complete.
\end{proof}

With the help of above lemma, we are able to complete our proof.
\begin{thm}\label{t2.2}
Let $\Omega$ be a bounded domain of $\rr^n$, $n\ge 2$. Let
$1<p<\infty$ and $Q\subset\subset \Omega$ be a closed cube.
Suppose that
for all $\mathbf{v}\in W^{1,p}(\Omega,\rn)$ it holds that
$$\lf\|D\mathbf{v}\r\|_{L^p(\Omega)}\le C\lf\{\|\epsilon(\mathbf{v})\|_{L^p(\Omega)}+\lf\|D\mathbf{v}\r\|_{L^p(Q)}\r\}.
\leqno ({\widehat{K}_{p}})$$
Then if  $\Omega$  satisfies the separation property, $\Omega$ is a John domain.
\end{thm}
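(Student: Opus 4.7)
The plan closely follows Theorem \ref{korn-john-2}; the only novelty needed is a careful placement of the distinguished point so as to annihilate the extra bulk term $\|D\mathbf{v}\|_{L^p(Q)}$ on the right-hand side of $(\widehat{K}_{p})$. By Theorem \ref{john-geo}, it suffices to exhibit a constant $C_E$ with $|E_B|\le C_E|B|$ for every separating ball $B$ with end $E_B$. Invoking the preceding lemma, I would take the distinguished point $x_0$ of the separation property in the interior of the given cube $Q$. The easy cases are then disposed of immediately: if $B\cap Q\neq\emptyset$ then $r\ge\dist(Q,\partial\Omega)>0$, forcing $|B|\ge c(Q,\Omega)$ and hence $|E_B|\le|\Omega|\le C|B|$; and if $|E_B|\le 4^n|B|$ there is nothing to prove. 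We therefore reduce to the case $B\cap Q=\emptyset$ and $|E_B|>4^n|B|$.

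Under this reduction, I would reuse verbatim the Lipschitz cut-off $\phi$ from \eqref{test-phi} and the rotational test field $\mathbf{v}=(v_1,v_2,0,\dots,0)$ of \eqref{test-v}. The crucial observation is that $Q$ is a connected subset of $\Omega\setminus B$ containing $x_0$, so $Q$ lies entirely in the component of $\Omega\setminus B$ that contains $x_0$; consequently $Q\cap E_B=\emptyset$, $\mathbf{v}\equiv 0$ on $Q$, and $\|D\mathbf{v}\|_{L^p(Q)}=0$. In particular, no compensating rotation $\mathbf{w}$ is required this time, and $(\widehat{K}_{p})$ applied to $\mathbf{v}$ collapses to
$$\|D\mathbf{v}\|_{L^p(\Omega)}\le C\|\epsilon(\mathbf{v})\|_{L^p(\Omega)},$$
which is essentially the inequality that was exploited in the endgame of the proof of Theorem \ref{korn-john-2}.

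The remainder of the argument is then identical to that of Theorem \ref{korn-john-2}: on $E_B\setminus B(z,2r)$ the matrix $D\mathbf{v}$ is the antisymmetric rotation displayed in \eqref{value-v} with $|D\mathbf{v}|\ge 1$, whereas $\epsilon(\mathbf{v})$ is supported in the annular shell $E_B\cap(B(z,2r)\setminus B(z,r))$ with $|\epsilon(\mathbf{v})|\lesssim 1$. Inserting both bounds into the displayed inequality yields $|E_B\setminus B(z,2r)|\le C|B|$, hence $|E_B|\le C|B|$, and a final appeal to Theorem \ref{john-geo} finishes the proof. I expect no serious obstacle here: the only new input beyond the proof of Theorem \ref{korn-john-2} is the freedom to place $x_0\in Q$, which is precisely the content of the preceding lemma, and the topological fact $Q\cap E_B=\emptyset$, which is a direct consequence of the connectedness of $Q$ together with $Q\cap B=\emptyset$.
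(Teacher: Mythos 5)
Your proposal is correct and takes essentially the same route as the paper: the paper likewise invokes the preceding lemma to relocate the distinguished point into $Q$, so that $Q$ lies in the $x_0$-component of $\Omega\setminus B$, forcing $\mathbf{v}\equiv 0$ on $Q$ and eliminating the $\|D\mathbf{v}\|_{L^p(Q)}$ term, after which the endgame is identical to Theorem \ref{korn-john-2}. You merely make explicit the topological fact $Q\cap E_B=\emptyset$ that the paper leaves implicit in its inequality chain.
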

\begin{proof}
Suppose that $\Omega$  satisfies the separation property w.r.t. $x_0\in \Omega$.  Then by the above lemma,
$\Omega$  satisfies the separation property w.r.t. $x_Q\in Q$, where $x_Q$ is the center of $Q$.

By Theorem \ref{john-geo}, to show $\Omega$ is a John domain,
it suffices to show that there exists an absolute constant $C_E>0$
such that for each separating ball $B$, $B=B(z,r)$ with $z\in \Omega$, its end $E_B$, if exists, then $|E_B|\le C_E|B|$.

If $B\cap Q\neq \emptyset$, then by $B\cap \partial \Omega\neq \emptyset$ we see that  $r\ge \dist(Q,\partial \Omega)/2$.
In this case, it holds that $|E_B|\le |\Omega|\le C(\Omega,Q)|B|$.

Suppose now $B\cap Q=\emptyset$.
Let $E_B$ be the $B$-end. If $E_B\subset B(z,4r)$ or $|E_B|\le 4^n|B|$, then the conclusion is obvious.
Otherwise, let $\phi$ and $\mathbf{v}$ be given as in \eqref{test-phi} and \eqref{test-v}, respectively.

Applying the Korn inequality $({\widehat{K}_{p}})$ to $\mathbf{v}$, we find that
 $$\lf\|D\mathbf{v}\r\|_{L^p(\Omega)}\le C_K\lf\{\|\epsilon(\mathbf{v})\|_{L^p(\Omega)}+\lf\|D\mathbf{v}\r\|_{L^p(Q)}\r\},
\leqno ({\widehat{K}_{p}})$$
 and can conclude from the construction of $\mathbf{v}$ that
\begin{eqnarray*}\label{end-est}
|E_B\setminus B(z,2r)|&&\le\lf\|D\mathbf{v}\r\|_{L^p(\Omega)}^p \le
C_K^p\lf\{\|\epsilon(\mathbf{v})\|_{L^p(\Omega)}+\lf\|D\mathbf{v}\r\|_{L^p(Q)}\r\}^p\\
&&\le C_K^p\|\epsilon(\mathbf{v})\|_{L^p(B(z,2r))}^p\le 3^pC_K^p|B(z,2r)|.
\end{eqnarray*}
Hence, we find that $|E_B\setminus B(z,2r)|\le C|B|.$  From this and applying Theorem \ref{john-geo}, we finally
obtain that $\Omega$ is a John domain.
\end{proof}

\begin{proof}[Proof of Theorem \ref{main}]
The implication $(i)\Longrightarrow(iv)$  is contained in \cite{adm06} (see Theorem \ref{adm}).
The implication $(iv)\Longrightarrow(i)$ can be seen as follows:
if $(iv)$ holds for some $p\in (1,\infty)$, then Lemma \ref{john-korn} implies $(K_q)$ on $\Omega$, $1/p+1/q=1$,
and therefore $(i)$ holds by Theorem \ref{korn-john-2}.

The implications $(i)\Longrightarrow(ii), (iii)$ are rather standard and are proved in Corollary \ref{john-korn-main}.
The implications  $(ii), (iii)\Longrightarrow (i)$ are proved in Theorem \ref{korn-john-2}
and Theorem \ref{t2.2}, respectively.

It is obvious that $(ii'), (iii'), (iv')$ implies $(ii), (iii), (iv) $, respectively. Conversely,
if one of conditions $(ii), (iii)$ and $(iv)$ holds,
then (i) holds, i.e., $\Omega$ must be John, Theorem \ref{adm} and Corollary \ref{john-korn-main} imply that
$(ii'), (iii'), (iv')$ all hold.
\end{proof}

\section{Further discussions}
\label{discussion}

\subsection{The separation property}
\hskip\parindent In this subsection, we discuss  the separation
property and list several known examples of domains satisfying and not
satisfying this property.

It  follows quite easily from the definitions that every John domain has the
separation property.
On the other hand,  domains with exterior cusps, e.g.,
$$\Omega_\alpha=\left\{x=(x_1,\cdots,x_n):\, 0<x_n<1,
0<x_1^2+\cdots+x_{n-1}^2<x_n^{2\alpha}\right\} ,$$
with $n\ge 2$ and $\alpha>1$  or domains of  rooms-and-corridors type (cf.
\cite[Example 4.1]{jk15}), satisfy the separation property, but are not John
domains. Domains with inward cusps such as $B(0,1)\setminus
\{(x,0,\ldots,0)\colon x\geq0\}\subset \rr^n$ is John and thus also satisfies
separation condition.

A domain with flat exterior cusp
$$\Omega'_\alpha=\left\{x=(x_1,\cdots,x_{n+1}):\, 0<x_n, x_{n+1}<1,
0<x_1^2+\cdots+x_{n-1}^2<x_n^{2\alpha}\right\} \subset \rr^{n+1}$$
does not satisfy John condition nor separation property.

More generally, one has
\begin{lem}[Buckley-Koskela \cite{bk95}]\label{lem-separation}
Suppose that $\Omega\subset \mathbb{R}^n$ is quasiconformally equivalent to a uniform
domain $G\subset\mathbb{R}^n$, $n\ge 2$. Then $\Omega$ has the separation property.
\end{lem}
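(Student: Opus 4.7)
The plan is to show first that every uniform domain automatically satisfies the separation property in the strongest sense (always in the first alternative of Definition~\ref{separation property}), and then to transfer this property through the $K$-quasiconformal homeomorphism $f\colon G\to\Omega$. Throughout I write $\rho_G(z)=\dist(z,\partial G)$ and $\rho_\Omega(w)=\dist(w,\partial\Omega)$.

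\textbf{Step 1.} A uniform domain is in particular a John domain. Fixing any distinguished point $y_0\in G$, for each $y\in G$ the arclength-parametrised John curve $\alpha\colon[0,\ell]\to G$ from $y$ to $y_0$ satisfies $\rho_G(\alpha(t))\ge C_J\,t$. Hence $\alpha([0,t])\subset B(\alpha(t),t)\subset B(\alpha(t),\rho_G(\alpha(t))/C_J)$, so the first alternative in Definition~\ref{separation property} holds at every $t\in[0,\ell]$ and $G$ has the separation property with constant $C_s=\max\{1,1/C_J\}$.

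\textbf{Step 2.} I would take $x_0=f(y_0)$ as the distinguished point of $\Omega$. Given $x\in\Omega$, set $y=f^{-1}(x)$, let $\alpha$ be the John curve from $y$ to $y_0$ furnished by Step 1, and put $\gamma=f\circ\alpha$. The standard $K$-quasiconformal distortion estimate gives $c_1,c_2>0$, depending only on $n,K$, such that
$$f\bigl(B(z,\rho_G(z)/c_1)\bigr)\ \subset\ B\bigl(f(z),c_2\,\rho_\Omega(f(z))\bigr)\qquad\text{for every }z\in G.$$
The ball $B(\alpha(t),\rho_G(\alpha(t))/C_J)$, which by Step 1 contains $\alpha([0,t])$, can be covered by a bounded number $N=N(n,C_J,c_1)$ of balls $B(z_i,\rho_G(z_i)/c_1)$ whose centres $z_i$ are chained along a curve inside the larger ball with comparable $\rho_G$-values. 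Iterating the distortion estimate along the chain yields
$$f\bigl(B(\alpha(t),\rho_G(\alpha(t))/C_J)\bigr)\ \subset\ B\bigl(\gamma(t),C_s'\rho_\Omega(\gamma(t))\bigr)$$
for some $C_s'=C_s'(n,K,C_J)$. Therefore $\gamma([0,t])\subset B(\gamma(t),C_s'\rho_\Omega(\gamma(t)))$, which verifies the separation property of $\Omega$ with constant $C_s'$, again always in the first alternative.

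\textbf{The main obstacle} is the chaining argument in Step 2: a single quasiconformal distortion inequality controls only balls whose radius is a small fraction of $\rho_G(z)$, while $B(\alpha(t),\rho_G(\alpha(t))/C_J)$ may have radius comparable to $\rho_G(\alpha(t))$. The conceptually cleanest fix is to work in the quasihyperbolic metric: by Gehring--Osgood, $f$ distorts $k_G$ and $k_\Omega$ in a bi-H\"older manner, and the John property of $\alpha$ from Step 1 ensures that $\alpha|_{[0,t]}$ has bounded $k_G$-length. Converting the resulting $k_\Omega$-bound on $\gamma([0,t])$ back into a Euclidean bound of the form $C_s'\rho_\Omega(\gamma(t))$ via the standard inclusion $B_k(w,r)\subset B(w,(e^r-1)\rho_\Omega(w))$ completes the proof; all remaining geometry (Whitney-style coverings and counting of chain links) is routine.
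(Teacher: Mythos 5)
Your Step 1 is correct: a uniform (hence John) domain always falls into the first alternative of Definition~\ref{separation property}, since $\alpha([0,t])\subset \overline{B}(\alpha(t),t)\subset\overline{B}(\alpha(t),\rho_G(\alpha(t))/C_J)$. The fatal gap is in Step 2, where you claim the \emph{image} curve $\gamma=f\circ\alpha$ also always satisfies the first alternative, i.e.\ $\gamma([0,t])\subset B(\gamma(t),C_s'\rho_\Omega(\gamma(t)))$ for all $t$. That conclusion is equivalent to $\diam\gamma([0,t])\le 2C_s'\rho_\Omega(\gamma(t))$, which is precisely the characterisation of a John domain used in the paper's proof of Theorem~\ref{john-geo} (via \cite{ms79,nv91}). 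In other words, your argument, if it worked, would show that every quasiconformal image of a uniform domain is John. This is false, and the paper itself supplies the counterexample: the outward-cusp domains $\Omega_\alpha$ of Section~\ref{discussion} are simply connected planar domains, hence conformally ($1$-quasiconformally) equivalent to the unit disc, a uniform domain, yet they are explicitly noted not to be John. For such a domain, at a point $\gamma(t)$ at height $y_0$ near the cusp tip one has $\rho_\Omega(\gamma(t))\approx y_0^\alpha\ll y_0$, while $\gamma([0,t])$ extends over a vertical span comparable to $y_0$, so no ball of radius $C\rho_\Omega(\gamma(t))$ can contain it. The whole reason Definition~\ref{separation property} carries a second alternative is that quasiconformal maps destroy the first alternative in exactly this way, and it is the topological separation by $\partial B$ (preserved because $f$ is a homeomorphism) that survives; a correct proof must engage with that alternative.

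Your proposed ``conceptually cleanest fix'' fails for the same underlying reason. You assert that the John property of $\alpha$ gives $\alpha|_{[0,t]}$ bounded quasihyperbolic length, but this is not so: the quasihyperbolic diameter of $\alpha([0,t])$ is at least
\[
k_G(\alpha(0),\alpha(t))\ \gtrsim\ \Bigl|\log\frac{\rho_G(\alpha(t))}{\rho_G(\alpha(0))}\Bigr|,
\]
which tends to $\infty$ as the starting point $\alpha(0)=y$ approaches $\partial G$. Consequently the Gehring--Osgood estimate gives an unbounded $k_\Omega$-bound, and the inclusion $B_k(w,r)\subset B(w,(e^r-1)\rho_\Omega(w))$ then produces a useless Euclidean bound. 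The intermediate ``chaining'' variant has the same problem in disguise: the number of Whitney-scale balls needed to link $\alpha(0)$ to $\alpha(t)$ is of the order of $k_G(\alpha(0),\alpha(t))$, not an absolute constant $N(n,C_J,c_1)$, so the iterated distortion constants blow up. In short, the approach of forcing the first alternative cannot succeed; the argument in Buckley--Koskela necessarily uses quasiconformal distortion only to control Whitney-scale geometry and relies on the homeomorphism to transfer the \emph{separation} (second) alternative.
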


Above, by $\Omega$ quasiconformally equivalent to $G$, we mean there is a
homeomorphism $f$ of $G\subset \mathbb{R}^n$ onto $\Omega\subset\mathbb{R}^n$
with
$f\in W^{1,n}_\loc(G,\rn)$ for which there exists a constant $K\geq1$  such that
$|Df(x)|^n \le  KJ_f (x)$ for almost every $x\in G$,
here $J_f$ is
the
Jacobian determinant of $Df.$
If $K =1$ then the resulting mappings are conformal mappings. Other examples of
quasiconformal mappings are bi-Lipschitz mappings in all dimensions.

A domain $G$ is {\em uniform} if there is a constant $C>0$ such that for any pair $x, y\in G$
there exists  a curve $\gamma: [0,\ell]\mapsto  G$ parametrised by arclength
such that $\gamma(0) = x$, $\gamma(\ell) = y$, $\ell\le C|x-y|$, and $d(\gamma(t),\rn\setminus G) \ge
\frac 1C \min\{t, \ell-t\}$; see \cite{bk95,jo81} for more about uniform
domains. Typical examples of a uniform domains are the balls in $\rr^n.$
Therefore, all quasiconformal images of balls have separation
property. Notice that a uniform domain is always a John domain, but  the
converse is not true.

The following result is somehow well known, we include a proof for completeness; see \cite{bk95}.
\begin{cor}\label{fint-sp}
Any finitely connected plane domain satisfies the separation property.
\end{cor}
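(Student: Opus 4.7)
The plan is to deduce the corollary from Lemma \ref{lem-separation} by exhibiting, for each finitely connected plane domain $\Omega$, a quasiconformal (in fact conformal) homeomorphism from $\Omega$ onto a uniform domain. The natural tool will be the classical Koebe uniformization theorem for multiply connected plane regions.

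First I would treat the case in which every component of $\mathbb{R}^2\setminus\Omega$ is nondegenerate, i.e., contains more than one point. In this case, by Koebe's theorem $\Omega$ is conformally equivalent to a \emph{circle domain} $G\subset\mathbb{R}^2$ whose boundary consists of finitely many pairwise disjoint round circles. Such a $G$ is a bounded Lipschitz domain with smooth boundary whose boundary components are uniformly separated, hence a uniform domain in the sense of Jones \cite{jo81}. Since a conformal map and its inverse are both $1$-quasiconformal, Lemma \ref{lem-separation} would then give that $\Omega$ has the separation property.

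The remaining case is when $\Omega$ has isolated boundary points $p_1,\ldots,p_\ell$; there are only finitely many since $\Omega$ is finitely connected. Setting $\tilde\Omega := \Omega \cup \{p_1,\ldots,p_\ell\}$, the previous step applies to $\tilde\Omega$ and produces, for any $x\in\Omega$, a separation curve $\tilde\gamma$ in $\tilde\Omega$ from $x$ to a fixed basepoint $x_0\in\Omega$. I would then perturb $\tilde\gamma$ locally near each $p_j$ so that the new curve lies in $\Omega$; near each puncture the geometry is that of a punctured disk, where the separation condition can be verified directly with an absolute constant, and these local modifications can be patched onto $\tilde\gamma$ at the cost of a larger separation constant depending only on $\tilde C_s$ and $\ell$. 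The main obstacle is precisely this second case: uniformization alone does not yield a uniform target when $\Omega$ has isolated boundary points, since such punctures are incompatible with the double-cone property defining uniformity, so the punctured disk analysis and the patching of local separation curves with the global curve from the nondegenerate case are where the real work lies.
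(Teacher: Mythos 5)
Your high-level plan---Koebe uniformization onto a circle domain followed by Lemma~\ref{lem-separation}---is precisely the route the paper takes, and your treatment of the case with nondegenerate boundary components is correct. The problem is the premise underlying your second case. You assert that isolated boundary points are ``incompatible with the double-cone property defining uniformity,'' but this is false: a bounded circle domain whose boundary components are circles \emph{or points} is itself a uniform domain. For two points $x,y$ near a puncture $p$ with $|x-p|\le|y-p|$, the curve that goes radially from $x$ out to the circle $\{|z-p|=|y-p|\}$ and then along a minor arc of that circle to $y$ has length comparable to $|x-y|$, and along it the distance to $p$ dominates the arclength distance to the nearer endpoint, so the cigar condition holds with an absolute constant; since the finitely many boundary components of a circle domain are pairwise at positive distance, this local check globalizes. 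That circle domains with possibly degenerate (point) components are uniform is exactly what the paper's appeal to Jones~\cite[Theorem~4]{jo81} provides, so no case split is needed and the proof is a one-liner.

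Even setting that aside, your patching argument for case (2) is too vague to close the gap. After a generic ``local perturbation'' of $\tilde\gamma$ near a puncture $p_j$, consider a parameter $t$ with $\gamma(t)$ close to $p_j$: the separation ball $B(\gamma(t),C_s\rho(\gamma(t)))$ now has radius comparable to $d(\gamma(t),p_j)$, which is tiny. Such a small disk does not disconnect $\Omega$ (removing a small closed disk around an isolated boundary point leaves a connected set), so the separation alternative cannot hold, and the ball also fails to contain $\gamma([0,t])$ when $\gamma(0)$ is far from $p_j$. To satisfy Definition~\ref{separation property} the curve must leave $p_j$ at a definite linear rate so that the containment alternative $\gamma([0,t])\subset B$ holds for those $t$; this quantitative escape is exactly what the uniform-domain structure gives you for free, and it is not established by an unspecified perturbation.
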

\begin{proof}
By Koebe's uniformization theorem, any finitely connected plane domain is
conformally equivalent to a circle domain,
where a circle domain means each component of its boundary is either a point or a circle; see \cite{hk91} for instance.
The above lemma together with Jones \cite[Theorem 4]{jo81} then gives the desired conclusion.
\end{proof}

This corollary together with Theorem \ref{main} gives Corollary \ref{main-cor}
for any finitely connected domains in the plane. On the other hand,
Example \ref{infinite-nonjohn} shows analogues of Corollary \ref{main-cor} fails
for infinitely connected domains in the plane.
{ Notice that an infinitely connected domain might  be or be not a John domain. For instance, on the plane, 
 a domain obtained by removing a countable set $F$ from the unit ball $B(0,1)$, 
 where $F=\cup_{k\in \cn}\{(1-2^{-k},0)\}$, is a John domain, while the domain $\Omega$ from the following example
is not a John domain. }

We remark that the domains
 used in the following two examples are from Buckley-Koskela \cite{bk95}.

\begin{proof}[Example \ref{infinite-nonjohn}] Take a ball $B=B(0,1)$. For each $k\in \cn$, let
$E_k =\{x_{k,j}\}_{j=1}^{k!}$,  where $x_{k,j}$ are equally spaced on the circle  $S(0,1-2^{-k})\subset \rr^2$.
Let $\Omega:=B\setminus E$, where $E=\cup_{k\in\cn}E_k$.
We claim that $\Omega$ is an infinitely connected plane domain, which supports the Korn inequality, but is not a John domain.

Notice that $E$ consists of countable points, and hence is removable for Sobolev spaces $W^{1,p}$; see Koskela \cite{kos99}. Since
$B=B(0,1)$ supports the Korn inequality, $E$ is removable, we see that $\Omega$ supports the Korn inequality as well.
However, $\Omega$ is not John. To see this, let us argue by contradiction. Suppose that $\Omega$ is a John domain
with a distinguished point $x_0$. Then $\Omega$ is a John domain with the distinguished point as the origin.
Take an arbitrary point $y_k\in \Omega$ with $|y_k|=1-2^{-k}$. Then for any curve $\gamma_k$ linked $y_k$ to the origin,
there is a point $z_{k}\in \gamma_k\cap S(0,1-2^{-k+1})$, then
$\mbox{lengh}(\gamma([y_k,z_k]))\ge 2^{-k}$, while $\rho(z_k)=d(z_k,\rr^2\setminus \Omega)\le d(z_k, E_k)\le \frac{C}{k!}$. This implies,
there cannot exist a constant $C_J>0$ such that
$$\frac{C}{k!}\ge \rho(z_k)\ge C_J\mbox{lengh}(\gamma([y_k,z_k]))\ge C_J2^{-k},$$
since $2^k/k!\to 0$ as $k\to \infty$. Therefore, $\Omega$ is not John, but supports the Korn inequality.
\end{proof}

Regarding higher dimensional cases, one cannot hope for exact analogues of Corollary \ref{main-cor} in dimensions $n\ge 3$
as we explained in the introduction; see \cite{jo81} and \cite{bk95}. We next complete the construction of Example \ref{simply-nonjohn}.

\begin{proof}[Example \ref{simply-nonjohn}] Let $\Delta=B(0,1)\subset \rr^2$, and $E=\cup_{k\in\cn}E_k$ be as in
the previous proof.
Let $F=E\times [0,1)$ and $D=\Delta\times (-1,1)$. Consider the domain $\Omega:=D\setminus F\subset \mathbb{R}^3$.
The domain $\Omega$ is then a simply connected domain in $\mathbb{R}^3$. Arguing similar to the previous proof,
one can see that $\Omega$ supports the Korn inequality, but $\Omega$ is not a John domain.
Similarly, one can construct examples in all dimensions $n\ge 3$.
\end{proof}

 Example \ref{simply-nonjohn} illustrates that the connectivity is not suitable for characterization of Korn inequality in high dimensional cases.
The separation property turns out to be a natural requirement in the sense that, it allows us to
get an almost complete classification of domains which supports Korn inequality in the plane,
and works well also in the higher dimensions.

Using the theory of removable sets for Sobolev functions, one can also show that for the domains in Examples \ref{infinite-nonjohn} and \ref{simply-nonjohn}, $(DE_p)$ for the divergence equation holds for $p\in (1,\infty)$.  It would be interesting to know if there are domains, without the separation property, such that the Korn inequality holds but $(DE_p)$ fails for some $p\in (1,\infty)$.

Notice that, in higher dimensions,  there is a rich class of domains  satisfying the separation property.
Besides John domains and H\"older domains as we recalled before Lemma \ref{lem-separation},
one can use quasiconformal mappings via Lemma \ref{lem-separation} to construct sufficient many examples;
see Gehring-V\"ais\"al\"a \cite{gv65}.
However, there is not a complete classification of domains under (quasi)conformal mappings
as in the two dimensional case. Indeed, it is still an important open question
whether one can characterize domains that are quasiconformally equivalent to the
unit ball in $\mathbb{R}^n$, $n\ge 3$; see Gehring \cite{g86}.

\subsection{Further questions}
\hskip\parindent In this subsection, let us discuss some remaining questions related to our results.

There is a different type of  Korn inequality
$$\lf\|D\mathbf{v}\r\|_{L^p(\Omega)}\le C\lf\{\|\epsilon(\mathbf{v})\|_{L^p(\Omega)}+\lf\|\mathbf{v}\r\|_{L^p(\Omega)}\r\}. \leqno(\widehat{K}_{p,w})$$
Obviously, $(\widehat{K}_{p,w})$ is weaker than $(\widehat{K}_p)$. Even on simply connected planar domain, we do not know how to obtain the geometric counterpart of $(\widehat{K}_{p,w})$. Indeed, as observed in \cite{jk15},
if two disjoint domains support $(\widehat{K}_{p,w})$, respectively,
then their union admits a $(\widehat{K}_{p,w})$, possibly with larger constant. However, $(K_p)$ or $(\widehat{K}_p)$
does not have this property. Therefore, in this case it looks hard to
derive the same geometric counterpart as from $(K_p)$ or $(\widehat{K}_p)$.

There is also an open problem regarding the relation of the constants in $(K_2)$ and the Babu\v ska-Aziz inequality,  which
is open even  on Lipschitz domains; see \cite{cd15}. Our result does not give any information on the optimal constant
in these inequalities.

\subsection*{Acknowledgment}
 \hskip\parindent The authors would like to thank Prof. Pekka Koskela for suggesting this topic and for valuable discussions.
R. Jiang was partially supported by National Natural Science Foundation of China (Nos. 11671039 \& 11626250). The research of A. Kauranen has been supported by the Academy of Finland via the Centre of Excellence in Analysis and Dynamics Research (project No. 271983)

\vspace{-0.3cm}

\vspace{0.4cm}

\noindent Renjin Jiang$^{1,2}$\& Aapo Kauranen$^{3}$

\

\noindent
1.  Center for Applied Mathematics,  Tianjin University, Tianjin 300072, China

\noindent 2. School of Mathematical Sciences, Beijing Normal University, Beijing 100875, China

\

\noindent 3. Department of Mathematics and Statistics, University of Jyv\"{a}skyl\"{a}, P.O. Box 35 (MaD),
FI-40014, Finland

\

\noindent{\it E-mail addresses}:
\texttt{rejiang@tju.edu.cn}

\hspace{2.3cm}
\texttt{aapo.p.kauranen@jyu.fi}

\end{document}